\documentclass[a4paper,UKenglish,cleveref,numberwithinsect,autoref]{lipics-v2021}

\pdfoutput=1 
\hideLIPIcs  

\usepackage[T1]{fontenc}
\usepackage[utf8]{inputenc}

\usepackage[scaled=.85]{beramono} 

\usepackage{amsmath}
\usepackage{amsfonts}
\usepackage{amssymb}
\usepackage{bbm}
\usepackage{upgreek} 

\usepackage[scr=rsfso,cal=boondoxo]{mathalfa} 

\usepackage{xypic}
\usepackage{xcolor}
\usepackage[shortlabels]{enumitem}

\usepackage{thmtools} 

\numberwithin{equation}{section}

\newcommand{\defemph}[1]{\textbf{\emph{#1}}} 

\newcommand{\defeq}{\mathchoice%
{\;\mathrel{{:}{=}}\;}
{\mathrel{{:}{=}}}
{\mathrel{{:}{=}}}
{\mathrel{{:}{=}}}
}

\newcommand{\defiff}{\mathchoice%
{\;\mathrel{{:}{\Leftrightarrow}}\;}
{\mathrel{{:}{\Leftrightarrow}}}
{\mathrel{{:}{\Leftrightarrow}}}
{\mathrel{{:}{\Leftrightarrow}}}
}

\renewcommand{\j}{\mathcal{j}} 
\renewcommand{\k}{\mathcal{k}} 

\newcommand{\Prop}{\mathsf{Prop}} 
\newcommand{\DProp}{\mathsf{DProp}} 
\newcommand{\oracle}[1]{\mathcal{o}_{#1}} 
\newcommand{\open}[1]{\bigcirc_{#1}} 

\newcommand{\predicate}[1]{\mathcal{p}_{#1}} 

\newcommand{\Type}{\mathsf{Type}} 

\newcommand{\lthen}{\Rightarrow} 
\newcommand{\liff}{\Leftrightarrow} 

\newcommand{\all}[1]{\forall #1.\,} 
\newcommand{\some}[1]{\exists #1.\,} 
\newcommand{\someone}[1]{\exists! #1.\,} 
\newcommand{\dprod}[1]{\Uppi{(#1)}.\,} 
\newcommand{\dsum}[1]{\Upsigma{(#1)}.\,} 
\newcommand{\lam}[1]{\uplambda{#1}.\,} 

\newcommand{\vthin}{\mkern1.5mu} 
\newcommand{\ap}{\vthin} 
\newcommand{\of}{\vthin{:}\vthin}

\newcommand{\one}{\mathbbm{1}} 
\newcommand{\two}{\mathbbm{2}} 
\newcommand{\NN}{\mathbbm{N}} 
\newcommand{\lpo}{\mathsf{lpo}} 
\newcommand{\lem}{\mathsf{lem}} 

\newcommand{\set}[1]{\{#1\}} 
\newcommand{\such}{\mid}

\newcommand{\bind}{\mathsf{bind}} 

\newcommand{\cont}[3][]{#2 \mathbin{\lhd}_{#1} #3} 

\newcommand{\Cont}{\mathsf{Cont}} 
\newcommand{\PCont}{\mathsf{PCont}} 

\newcommand{\Mod}{\mathsf{Mod}} 

\newcommand{\Tree}[1][]{\mathcal{T}_{#1}} 
\newcommand{\EquiTree}[1][]{\mathcal{E}_{#1}} 
\newcommand{\equi}{\mathsf{equi}} 

\newcommand{\leaf}{\mathsf{leaf}} 
\newcommand{\node}{\mathsf{node}} 

\newcommand{\prf}{\mathsf{prf}} 
\newcommand{\ask}{\mathsf{ask}} 

\newcommand{\ret}{\mathsf{ret}} 

\newcommand{\ileq}{\leq_{\mathsf{I}}} 

\newcommand{\down}[1]{{\downarrow}#1} 
\newcommand{\pt}{\mathsf{pt}} 

\newcommand{\sheafify}[2][]{\mathsf{shfy}_{#1}#2} 

\renewcommand{\AA}{\mathbbm{A}} 
\newcommand{\RT}[1]{\mathsf{RT}(#1)} 
\newcommand{\pair}[1]{\langle #1 \rangle} 
\newcommand{\numeral}[1]{\overline{#1}} 
\newcommand{\pow}[1]{\mathcal{P} #1} 

\newcommand{\asm}[1]{\mathbf{#1}} 
\newcommand{\rz}[1][]{\Vdash_{#1}} 
\newcommand{\app}{\cdot} 
\newcommand{\defined}[1]{#1{\downarrow}} 
\newcommand{\Oracle}[1]{\mathcal{O}_{#1}} 
\newcommand{\wleq}{\leq_{\mathsf{W}}} 
\newcommand{\support}[1]{\|#1\|} 

\newcommand{\Trunc}[1]{\lVert #1\rVert} 

\title{Sheaves as oracle computations} 

\author{Danel Ahman}{Institute of Computer Science, University of Tartu, Estonia \and \url{https://danel.ahman.ee}}{danel.ahman@ut.ee}{0000-0001-6595-2756}{This work was supported by the Estonian Research Council grant PRG2764.}

\author{Andrej Bauer}{Faculty of Mathematics and Physics, University of Ljubljana, Slovenia \and Institute of Mathematics, Physics and Mechanics, Slovenia \and \url{https://www.andrej.com}}{Andrej.Bauer@andrej.com}{0000-0001-5378-0547}{This material is based upon work supported by the Air Force Office of Scientific Research under award number FA9550-21-1-0024.
}

\authorrunning{D.~Ahman and A. Bauer} 

\Copyright{Danel Ahman and Andrej Bauer} 

\ccsdesc[500]{Theory of computation~Modal and temporal logics}

\keywords{modality, oracle, sheaf} 

\supplement{}
\supplementdetails[subcategory={Formalization}]{Software}{https://doi.org/10.5281/zenodo.19858008}

\acknowledgements{
  We thank Takayuki Kihara, Andrew Swan, and Sewon Park for enlightening discussions,
  and Martin Baillon, Assia Mahboubi, and Pierre-Marie Pédrot for a confluence of ideas.
}

\nolinenumbers 

\EventEditors{Frank Pfenning}
\EventNoEds{1}
\EventLongTitle{11th International Conference on Formal Structures for Computation and Deduction (FSCD 2026)}
\EventShortTitle{FSCD 2026}
\EventAcronym{FSCD}
\EventYear{2026}
\EventDate{July 20--23, 2026}
\EventLocation{Lisbon, Portugal}
\EventLogo{}
\SeriesVolume{378}
\ArticleNo{33}

\begin{document}

\maketitle

\begin{abstract}
In type theory, an oracle may be specified abstractly by a predicate whose domain is the type of queries asked of the oracle, and whose proofs are the oracle answers. Such a specification induces an oracle modality that captures a computational intuition about oracles: at each step of reasoning we either know the result, or we ask the oracle a query and proceed upon receiving an answer.
We characterize an oracle modality as the least one forcing the given predicate. We establish an adjoint retraction between modalities and propositional containers, from which it follows that every modality is an oracle modality. The left adjoint maps sums to suprema, which makes suprema of modalities easy to compute when they are given in terms of oracle modalities.
We also study sheaves for oracle modalities. We describe sheafification in terms of a quotient-inductive type of computation trees, and describe sheaves as algebras for the corresponding monad. We also introduce equifoliate trees, an intensional notion of oracle computation given by a (non-propositional) container. Equifoliate trees descend to sheaves, and modally cover them. As an application, we give a concrete description of all Lawvere-Tierney topologies in a realizability topos, closely related to a game-theoretic characterization by Takayuki Kihara.
\end{abstract}

\section{Introduction}
\label{sec:introduction}

Sheaves are a fundamental notion in several branches of mathematics, notably in geometry and algebra, as well as in logic. They are less frequently pursued from the computational point of view, which is the topic of our paper.

Our starting point is the definition of an \emph{oracle modality}, which captures the intuition that reasoning relative to an oracle proceeds by asking the oracle to provide evidence of statements that we cannot prove constructively, for instance, the decidability of the halting of a given Turing machine.
In \cref{sec:oracle-modalities} we provide inductive and fixed-point constructions of oracle modalities, and show that they are characterized as the least modalities forcing the specifying predicate.
After relating oracle modalities to instance reducibility in \cref{sec:open-modalities,sec:oracle-modal-inst}, we show in \cref{sec:every-modality-oracle} that the oracle modality construction is a functor from the category of propositional containers to the frame of modalities. Not only that, the functor has a section, which is also its right adjoint, from which it follows that every Lawvere-Tierney topology \cite{lawvere70:_quant} is in fact an oracle modality.

Encouraged by the generality of our approach, in \cref{sec:sheaves} we pass from modal proofs to modal constructions.
An element of a type $X$ constructed relative to an oracle is an element of the \emph{sheafification} of~$X$, which we describe as a quotient-inductive type. The inductive part provides a computational view of constructions relative to an oracle, where again at each step we either know an element of~$X$ or we ask the oracle a question. However, this view is of limited value because the quotient masks intensional information about the computation, such as which queries are asked in the construction.
We thus provide in~\cref{sec:oracle-modal-pred} a second description of sheafification in terms of \emph{equifoliate trees}. They capture intensional information one normally expects when studying effectful computations, because they are ordinary, unquotiented inductive trees built from an arbitrary, possibly non-propositional container, constrained to compute at most one element up to modal equality.
We prove that equifoliate trees form a monad, they descend to the sheafification for the associated propositionally truncated container, and that every element of the sheafification arises, modally, from an equifoliate tree.

As an application, in \cref{sec:modalities-realizability} we characterize all Lawvere-Tierney topologies in a relative realizability topos in terms of oracle modalities.
We give two explicit descriptions, one in terms of realizers encoding oracle computation for a predicate on a partitioned assembly, and another for an extended Weihrauch predicate.
The description is closely related to a recent game-theoretic treatment of Lawvere-Tierney topologies by Takayuki Kihara~\cite{kihara23:_lawver_tiern}.


\section{Type-theoretic background}
\label{sec:background}

For the majority of the paper, we will work in what is essentially the
underlying type theory of Rocq, i.e., the Calculus of Inductive Constructions
(CIC)~\cite{paulinmohring:cic-intro}. 
Specifically, we have dependent products $\dprod{x \of A} B(x)$, dependent sums $\dsum{x \of A} B(x)$, the identity type, which we write as $a =b$, as well as inductive types with dependent elimination. For instance, we respectively write $\two$ and $\NN$ for the standard inductive types of booleans
and natural numbers.
CIC has an infinite cumulative hierarchy of (predicative) type universes $\Type_0 : \Type_1 : \cdots$
which does not play a central role in our development. We just work at any chosen level $\Type_i$, which we simply write as $\Type$.

In several places we rely crucially on having an \emph{impredicative} universe $\Prop$ of propositions, i.e., one that allows quantification over arbitrary types.
For better readability, we use logical notation $\forall$, $\exists$, $\lthen$, $\liff$ for constructions in~$\Prop$
and the type-theoretic one $\Uppi$, $\Upsigma$, $\to$, $\leftrightarrow$ for operations on types.
Note that $\Prop$ is proof-irrelevant, i.e., from propositions we may eliminate only to propositions, and not to general types.
In particular, from a proof of $\some{x \of A} P \ap x$ we cannot in general extract an $a : A$ such that $P \ap a$.
Additionally, we assume the following principles, which are valid in toposes:
\begin{itemize}
\item \defemph{function extensionality}: functions giving equal values are equal,
\item \defemph{propositional extensionality}: if $p, q : \Prop$ and $p \liff q$, then $p = q$,
\item \defemph{uniqueness of proofs}: if $p : \Prop$ and $u, v : p$, then $u = v$,
\item \defemph{equality is propositional}: the codomain of the identity type is~$\Prop$.
\item \defemph{definite description}: if $\someone{x \of A} P \ap x$ then we have $a : A$ such that $P \ap a$.
\end{itemize}
Here $\someone{x \of A} P \ap x$ is the usual shorthand for $\some{x \of A} (P \ap x \land \all{y \of A} (P \ap y \lthen x = y))$ which expresses unique existence.

Propositionality of equality, together with uniqueness of proofs, implies \defemph{uniqueness of identity proofs} (UIP), which states that any two proofs of an equality are equal.
Both are used throughout \cref{sec:sheaves}, see \cref{prop:sheaf-tfae,thm:oracle-sheaf-when} for representative use.

A consequence of definite description is \defemph{unique choice}: if $P : A \to B \to \Prop$ and $\all{x \of A} \someone{y \of B} P \ap x \ap y$ then we have $f : A \to B$ such that $\all{x \of A} P \ap x \ap (f \ap x)$.
Any further extensions of type theory, such as quotient-inductive types used in \cref{sec:sheaves}, are discussed explicitly when used.

Much of our development can be best framed in terms of containers~\cite{AbbottAG:Containers}, also known as univariate polynomials~\cite{GambinoK:PolyMonads,PolyBook}. Recall that a \defemph{container}
$\cont A P$ comprises a type~$A$ of \defemph{shapes} and a type family~$P : A
\to \Type$ of \defemph{positions}.
Originally developed to model parametric data types and polymorphic functions, over time containers have found applications in many other areas of computer science and mathematics. 
They form a category $\Cont$, with a morphism $\cont{t}{q} : \cont{A}{P} \to \cont{B}{Q}$ given by maps $t : A
\to B$ and $q : \dprod{a \of A} Q (t \ap a) \to P \ap a$.
We shall mainly work with \defemph{propositional containers}, which are containers whose families of positions are
predicates $P : A \to \Prop$.
Propositional containers form a full subcategory $\PCont$ of $\Cont$.
We usually refer to a propositional container $\cont{A}{P}$ just by its predicate $P$.






\section{Oracle modalities}
\label{sec:oracle-modalities}

We take a \defemph{modality} to be a monad on $\Prop$ qua category, i.e., a map $\j : \Prop \to \Prop$ that is
\begin{itemize}
\item monotone: $\all{(p \ap q \of \Prop)} (p \lthen q) \lthen (\j \ap p \lthen \j \ap q)$,
\item inflationary: $\all{p \of \Prop} p \lthen \j \ap p$, and
\item idempotent: $\all{p \of \Prop} \j \ap (\j \ap p) \lthen \j \ap p$, 
\end{itemize}
also known as
a \emph{Lawvere-Tierney topology}~\cite{lane92:_sheav_geomet_logic},
a \emph{local operator}~\cite{oosten08:_realiz}, 
and a \emph{nucleus}~\cite{johnstone82:_stone}.

Every modality preserves conjunctions, $\all{(p, q \of \Prop)} \j (p \land q) = \j p \land \j q$.
Modalities form a frame~$\Mod$~\cite[\S2.5]{johnstone82:_stone} with respect to the pointwise ordering
$
  \j \leq \k \defiff \all{p \of \Prop} (\j \ap p \lthen \k \ap p)
$.

An \defemph{(abstract) oracle} is specified by a predicate $P : A \to \Prop$, where we think of an element $a : A$ as a \defemph{query} which the oracle \defemph{answers} by providing evidence of $P \ap a$. 
In the interesting cases the oracle so specified cannot be shown to exist constructively, i.e., $\all{a \of A} P \ap a$ is not provable. Nevertheless, we would still like to reason as if it were.

\begin{example}
  \label{exa:oracle-lem}%
  The excluded middle oracle $\lem : \Prop \to \Prop$ is given by $\lem \ap p \defeq p \lor \neg p$.
\end{example}

\begin{example}
  \label{exa:oracle-lpo}%
  The \emph{Limited Principle of Omniscience (LPO)}~\cite{bishop67} states that every infinite binary sequence either contains~$1$ or is constantly~$0$. The oracle corresponding to it is specified by the predicate $\lpo : \two^\NN \to \Prop$, defined as $\lpo \ap \alpha \defeq (\some{n} \alpha \ap n = 1) \lor (\all{n} \alpha \ap n = 0)$.
  Other instances of excluded middle can be devised in a similar fashion.
\end{example}

\begin{example}
  \label{exa:traditional-oracle}%
  A traditional oracle in computability theory decides membership in a subset $S \subseteq \NN$. In our setting,
  such a subset is expressed as a predicate $S : \NN \to \Prop$, which induces the oracle $\mathsf{mem}_S : \NN \to \Prop$ defined by $\mathsf{mem}_S \ap p \defeq S \ap p \lor \neg S \ap p$.
\end{example}

\begin{example}
  \label{exa:oracle-realized}%
  If $P : A \to \Prop$ specifies an oracle which is implemented by $f : \all{a \of A} P \ap a$, then no additional computational power is gained by having an oracle for~$P$, as we can replace it with~$f$.
  In \cref{exa:traditional-oracle} this happens when~$S$ is decidable.
\end{example}

\begin{example}
  \label{exa:oracle-counterexample}%
  The other trivial case arises when $P : A \to \Prop$ has a counterexample $a : A$ so that $\neg P \ap a$. In this case, the oracle gets stuck on the query~$a$ because~$P \ap a$ has no answer.
\end{example}

Intuitively speaking, to show that~$s : \Prop$ holds relative to an oracle specified by~$P : A \to \Prop$ we may either establish~$s$ without consulting the oracle, or ask the oracle a query~$a : A$ and, upon receiving evidence $u : P \ap a$, proceed recursively. Eventually we must stop querying the oracle and resolve~$s$ with the accumulated knowledge about~$P$.
The intuition is captured by the \defemph{oracle modality~$\oracle{P} : \Prop \to \Prop$}, defined at $s : \Prop$ inductively by the clauses
\begin{itemize}
\item $\prf : s \to \oracle{P} \ap s$,
\item $\ask : \dprod{a \of A} (P \ap a \to \oracle{P} \ap s) \to \oracle{P} \ap s$.
\end{itemize}
Thus, an element of $\oracle{P} \ap s$ is either $\prf \ap u$, where~$u$ is a direct proof of~$s$,
or $\ask \ap a \ap \kappa$, where~$a : A$ is a query and $\kappa : P \ap a \to \oracle{P} \ap s$ the continuation of the proof of $\oracle{P} \ap s$.
Beware, one should not think of $\oracle{P} \ap s$ as ``$s$ follows from finitely many instances of~$P$'', because the instances appearing in a proof of~$\oracle{P} \ap s$ cannot in general be collected without the aid of an oracle,
for access to the next query is guarded by an answer to the previous one.

Up to equivalence, $\oracle{P} \ap s$ may be constructed as the least fixed-point of the monotone map $f^P_s : \Prop \to \Prop$ defined by
\begin{equation}
  \label{eq:oracle-map}
  f^P_s \ap t \defeq s \lor \some{a \of A} (P \ap a \lthen t).
\end{equation}
We now see easily that $\oracle{P}$ is a modality: it is monotone because $f^P_s$ is monotone in the parameter~$s$,
it is obviously inflationary, and it is idempotent because $\oracle{P} \ap s$ is a fixed point of~$f^P_{\oracle{P} \ap s}$.
%
%
%
Tarski~\cite{Tarski55} constructed the least fixed-point of a monotone map on a complete lattice as the infimum of its prefixed points.
In our case, $r : \Prop$ is a prefixed point of $f^P_s$ when $f^P_s \ap r \lthen r$, or equivalently, when
\begin{equation}
  \label{eq:f-prefixed}
  (s \lthen r) \lthen (\all{a \of A} (P \ap a \lthen r) \lthen r) \lthen r.
\end{equation}

We may use~\eqref{eq:f-prefixed} to check that $\oracle{P} \ap s \leq r$ holds for a given~$r$ by verifying the 
\defemph{prefixed-point conditions}  $s \lthen r$ and $\all{a \of A} ((P \ap a \lthen r) \lthen r)$. Functional programmers will recognize the technique as \emph{folding} and type theorists as \emph{induction}.
The infimum of all $r$'s satisfying~\eqref{eq:f-prefixed} is equivalent to $\oracle{P} \ap s$:
\begin{equation}
  \label{eq:forcing-modality}%
  \oracle{P} \ap s \;\liff\;
  \all{r \of \Prop} (s \lthen r) \lthen (\all{a \of A} (P \ap a \lthen r) \lthen r) \lthen r.
\end{equation}
The above formula is known~\cite{hyland1982,oosten08:_realiz} to give the least modality~$\j$ that \defemph{forces}~$P$, by which we mean that $\all{a \of A} \j(P \ap a)$ holds.
We say that a predicate $P : A \to \Prop$ is \defemph{$\j$-dense} when it is forced by~$\j$, and similarly that a truth value $p : \Prop$ is $\j$-dense when $\j \ap p$ holds.

\begin{proposition}
  \label{prop:j-forces-P-iff}%
  A modality $\j$ forces a predicate~$P : A \to \Prop$ if, and only if, $\oracle{P} \leq \j$.
\end{proposition}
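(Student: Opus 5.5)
We prove the two directions separately, using the inductive definition of $\oracle{P}$ together with the characterization of $\oracle{P} \ap s$ as the least prefixed point of $f^P_s$ from \eqref{eq:f-prefixed} and \eqref{eq:forcing-modality}.

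For the direction from $\oracle{P} \leq \j$ to $\j$ forcing $P$, it suffices to show that $\oracle{P}$ itself forces $P$, that is, $\oracle{P}(P \ap a)$ for every $a : A$. This is immediate from the constructors: $\ask \ap a \ap (\lam{u} \prf \ap u)$ is an element of $\oracle{P}(P \ap a)$. Applying $\oracle{P} \leq \j$ at the proposition $P \ap a$ then yields $\j(P \ap a)$.

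For the converse, assume $\all{a \of A} \j(P \ap a)$ and fix $s : \Prop$. We must show $\oracle{P} \ap s \lthen \j \ap s$. By induction, that is, by the prefixed-point criterion \eqref{eq:f-prefixed} with $r \defeq \j \ap s$, it suffices to verify two conditions.
\begin{enumerate}
\item $s \lthen \j \ap s$. This holds because $\j$ is inflationary.
\item $\all{a \of A} ((P \ap a \lthen \j \ap s) \lthen \j \ap s)$. Given $a$ and $h : P \ap a \lthen \j \ap s$, monotonicity of $\j$ turns $h$ into $\j(P \ap a) \lthen \j(\j \ap s)$. Combining this with the hypothesis $\j(P \ap a)$ and idempotence gives $\j \ap s$.
\end{enumerate}
Hence $\oracle{P} \ap s \lthen \j \ap s$ for all $s$, which is exactly $\oracle{P} \leq \j$.

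There is no real obstacle here. The only step needing care is the inductive case in item 2, where the answer-dependent continuation $h$ must be pushed under $\j$. This uses all three modality axioms: inflationarity for the base case, and monotonicity followed by idempotence (the monad bind) for the inductive case.
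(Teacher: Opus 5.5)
Your proposal is correct and follows the paper's proof essentially step for step. For one direction, $\ask \ap a \ap (\lam{u} \prf \ap u)$ shows that $\oracle{P}$ forces $P$. For the other, you verify the prefixed-point conditions for $r \defeq \j \ap s$ using inflationarity, and then monotonicity together with idempotence of $\j$, exactly as the paper does.
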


\begin{proof}
  If $\oracle{P} \leq \j$ then $\j$ forces~$P$ because~$\oracle{P}$ does, which is easily checked.
  Conversely, assume $\j$ forces~$P$ and let $s : \Prop$.
  To prove $\oracle{P} \ap s \leq \j \ap s$, we check the prefixed-point conditions:
  $s \lthen \j \ap s$ holds because~$\j$ is inflationary, and if $a : A$ satisfies $P \ap a \lthen \j \ap s$ then from $\j (P \ap a)$ we deduce $\j (\j \ap s)$, whence $\j \ap s$ by idempotency of~$\j$.
\end{proof}

The following corollary tells us how to compare oracle modalities. In \cref{sec:modalities-realizability} it will be used to show that in realizability the order on oracle modalities coincides with (a generalization of) Turing reducibility.

\begin{corollary}
  \label{cor:oracle-leq}
  For predicates $P : A \to \Prop$ and $Q : B \to \Prop$, $\oracle{P} \leq \oracle{Q}$ if, and only if,
  $\all{a \of A} \oracle{Q}(P \ap a)$.
\end{corollary}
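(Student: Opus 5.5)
This is an immediate instance of \cref{prop:j-forces-P-iff}, taking the modality $\j$ there to be $\oracle{Q}$. The only thing to confirm first is that $\oracle{Q}$ is a modality, which was established above from the fixed-point description~\eqref{eq:oracle-map}.

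By \cref{prop:j-forces-P-iff} applied with $\j \defeq \oracle{Q}$ and the predicate~$P$, we have $\oracle{P} \leq \oracle{Q}$ if, and only if, $\oracle{Q}$ forces~$P$. By definition, forcing~$P$ means exactly $\all{a \of A} \oracle{Q}(P \ap a)$, so the corollary follows by unfolding this definition.

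For a self-contained check of the two directions, we can argue as follows.
\begin{itemize}
\item If $\oracle{P} \leq \oracle{Q}$, then for each $a : A$ we have $\oracle{P}(P \ap a)$, witnessed by $\ask \ap a \ap (\lam{u} \prf \ap u)$. Monotonicity in the pointwise order then gives $\oracle{Q}(P \ap a)$.
\item Conversely, suppose $\all{a \of A} \oracle{Q}(P \ap a)$ and let $s : \Prop$. We verify the prefixed-point conditions~\eqref{eq:f-prefixed} for $r \defeq \oracle{Q} \ap s$. The condition $s \lthen \oracle{Q} \ap s$ is the constructor $\prf$. For the other condition, given $P \ap a \lthen \oracle{Q} \ap s$, monotonicity together with $\oracle{Q}(P \ap a)$ yields $\oracle{Q}(\oracle{Q} \ap s)$, and idempotency then gives $\oracle{Q} \ap s$.
\end{itemize}

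There is no real obstacle here. The only subtle point is remembering that $\oracle{P}$ itself forces~$P$, which the $\ask$ construction above supplies.
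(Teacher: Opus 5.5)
Your proposal is correct and matches the paper's argument: the corollary is just \cref{prop:j-forces-P-iff} instantiated at $\j \defeq \oracle{Q}$, with forcing unfolded to $\all{a \of A} \oracle{Q}(P \ap a)$. Your self-contained check of both directions is also sound; it re-runs the proof of that proposition in this special case.
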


In general, the modality $\oracle{P}$ is what it is, but in some cases it can be computed.

\begin{example}
  \label{exa:modality-lem}%
  The excluded middle oracle $\lem$ from \cref{exa:oracle-lem} induces the double negation modality, $\oracle{\lem} s \liff \neg\neg s$.
  Indeed, for any $s : \Prop$ one shows $\oracle{\lem} \ap s \lthen \neg\neg s$ by giving intuitionistic proofs of the prefixed-point conditions, while the converse $\neg\neg s \lthen \oracle{\lem} \ap s$ follows from $\neg\neg s \lthen (s \lor \neg s \lthen s) \lthen s$, which also has an intuitionistic proof.
\end{example}

\begin{example}
  \label{exa:modality-counterexample}%
  If a predicate $P : A \to \Prop$ has a counterexample~$a : A$, as in \cref{exa:oracle-counterexample}, then~$\oracle{P}$ is the trivial modality $\oracle{P} \ap s \liff \top$ because $P \ap a \to s$ holds vacuously.
\end{example}

\begin{example}
  \label{exa:modality-realized}%
  If a predicate $P : A \to \Prop$ is realized by some $f : \all{a \of A} P \ap a$, as in \cref{exa:oracle-realized}, then $\oracle{P}$ is the other trivial modality, namely, $\oracle{P} \ap s \liff s$.
  Indeed, $s \lthen \oracle{P} \ap s$ holds because~$\oracle{P}$ is inflationary, while $\oracle{P} \ap s \lthen s$
  follows from the prefixed-point conditions.
\end{example}

A predicate $P : A \to \Prop$ has no counterexamples precisely when it is \defemph{$\neg\neg$-dense}, i.e., when $\all{a \of A} \neg\neg P \ap a$. \Cref{exa:modality-counterexample} teaches us that only $\neg\neg$-dense predicates beget interesting modalities, but it would be a mistake to interpret \cref{exa:modality-realized} as saying that $\neg \all{a \of A} P \ap a$ is necessary for having an interesting modality.

\begin{example}
  \label{exa:generic-point}%
  For a more elaborate example, we draw inspiration from locale theory,
  and refer to~\cite[\S2.11]{johnstone82:_stone} as a standard reference.
  Consider a meet-semilattice $(A, {\sqsubseteq}, 1, {\sqcap})$, whose elements are construed as the basic opens of a space.
  Let $\down{a} \defeq \dsum{b \of A} b \sqsubseteq a$ be the lower set of~$a : A$,
  and $\_ \triangleleft \_ : \dprod{a \of A} (\down{a} \to \Prop) \to \Prop$ a coverage on~$A$, i.e,
  if $a \triangleleft U$ and $a \sqcap b = b$ then $b \triangleleft (\lam{c} \some{d \of \down{a}} U \ap d \land c = d \sqcap b)$.
  The intuitive meaning of $a \triangleleft U$ is ``the basic open $a$ is covered by the collection of basic opens $U$''.
  A \emph{generic point} of the locale generated by~$(A, {\triangleleft})$ may be modeled as follows. Define the predicate $\pt : A \to \Prop$ inductively by stipulating $\pt \ap 1$ and $\pt \ap (a \sqcap b)$ for all $a, b : A$ such that $\pt \ap a$ and $\pt \ap b$.
  We write $\pt \in a$ instead of $\pt \ap a$ to suggest the reading ``the generic point is an element of the basic open $a$''.
  The coverage induces an oracle
  $
    (\dsum{a \of A} \dsum{U : \down{a} \to \Prop} a \triangleleft U) \to \Prop,
  $
  defined by
  \begin{equation*}
      (a, U, \_) \mapsto (a \in \pt \lthen \some{b \of \down{a}} U \ap b \land b \in \pt).
  \end{equation*}
  In words, if $\pt \in a$ and $a$ is covered by~$U$, then we ask the oracle to demonstrate that $\pt \in b$
  for some $b : \down{a}$ such that $U \ap b$. The resulting modality may be non-trivial even if the locale generated by $(A, {\triangleleft})$ has no points.
\end{example}

\subsection{A caveat about open modalities}
\label{sec:open-modalities}

In informal mathematics, reliance on a reasoning principle~$\all{a \of A} P \ap a$ in a proof of a statement~$\all{b \of B} Q \ap b$ is usually indicated by a phrase such as ``we assume~$\all{a \of A} P \ap a$''. Taken literally, this would mean that the proven statement has the form
\begin{equation}
  \label{eq:Pa-lthen-Qb}
  (\all{a \of A} P \ap a) \lthen (\all{b \of B} Q \ap b),
\end{equation}
which can be expressed as the modal statement $\open{(\all{a \of A} P \ap a)} (\all{b \of B} Q \ap b)$ using the \defemph{open modality} $\open{p} : \Prop \to \Prop$, defined for $p : \Prop$ as
$\open{p} \ap s \defeq (p \lthen s)$.
The accompanying proof typically establishes~\eqref{eq:Pa-lthen-Qb} not by using the assumption $\all{a \of A} P \ap a$ arbitrarily, but by providing a \emph{functional instance reduction}~\cite{ahman25:_comod,Bauer:InstanceReducibility}---a map $h : B \to A$ such that $\all{b \of B} (P \ap (h \ap b) \lthen Q \ap b)$, which can be used to prove the stronger statement $\all{b \of B} \oracle{P} (Q \ap b)$.

Such apparent uses of the open modality in informal mathematics thus typically arise not from a principled logical distinction but from imprecision in exposition. The oracle modality is therefore the more faithful interpretation of actual mathematical practice, especially since the accompanying proofs already conform to it.

A second reason for preferring the oracle modality is that in realizability models $\oracle{P} \ap s$ has meaningful computational content even when $\neg\all{a \of A} P \ap a$, whereas~\eqref{eq:Pa-lthen-Qb} becomes vacuous when the antecedent is false. We shall elaborate on this point in \cref{sec:modalities-realizability}.

\subsection{Instance reducibility}
\label{sec:oracle-modal-inst}

Oracle modalities are closely related to instance reducibility, a notion defined in~\cite{Bauer:InstanceReducibility} as the constructive analogue and generalization of Weihrauch reducibility~\cite{brattka11:_weihr}.
Recall that $P : A \to \Prop$ is \defemph{instance reducible} to $Q : B \to \Prop$, written $P \ileq Q$, when $\all{a \of A} \some{b \of B} (Q \ap b \lthen P \ap a)$.
Kihara~\cite{kihara23:_lawver_tiern} notes that instance reducibility is like a single-query oracle computation.
In our setting, his observation is expressed with the aid of the map $\mathcal{i}_P : \Prop \to \Prop$, defined by
\begin{equation*}
  \mathcal{i}_P \ap s \defeq \some{a \of A} (P \ap a \lthen s),
\end{equation*}
which expresses the fact that $s$ follows from a single query to an oracle specified by~$P$.
An instance reduction $P \ileq Q$ may be stated as $\all{a \of A} \mathcal{i}_Q \ap (P \ap a)$, which we read ``$\mathcal{i}_Q$ forces~$P$''.
The map~$\mathcal{i}_P$ itself is not a modality, but it generates the corresponding oracle modality.

\begin{proposition}
  \label{prop:oracle-least-above-instance}
  For any $P : A \to \Prop$,  $\oracle{P}$ is the least modality above~$\mathcal{i}_P$.
\end{proposition}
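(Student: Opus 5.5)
We must prove two things: first, $\mathcal{i}_P \leq \oracle{P}$, and second, every modality $\j$ with $\mathcal{i}_P \leq \j$ satisfies $\oracle{P} \leq \j$. Here ``above'' is understood with respect to the pointwise order on maps $\Prop \to \Prop$, the same order that defines $\Mod$.

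For the first part, fix $s : \Prop$ and assume $\mathcal{i}_P \ap s$, that is, $\some{a \of A} (P \ap a \lthen s)$. Since $\oracle{P} \ap s$ is a proposition, we may eliminate the existential and obtain some $a : A$ together with $h : P \ap a \to s$. Then $\ask \ap a \ap (\prf \circ h)$ is an element of $\oracle{P} \ap s$. Equivalently, $\mathcal{i}_P \ap s$ implies $f^P_s \ap (\oracle{P} \ap s)$, which implies $\oracle{P} \ap s$ because $\oracle{P} \ap s$ is a fixed point of $f^P_s$.

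For the second part, the plan is to reduce to \cref{prop:j-forces-P-iff}. Suppose $\j$ is a modality with $\mathcal{i}_P \ap s \lthen \j \ap s$ for every $s$. For any $a : A$ we have $\mathcal{i}_P (P \ap a)$, witnessed by $a$ itself together with the identity map $P \ap a \lthen P \ap a$. Hence $\j (P \ap a)$ holds for every $a$, so $\j$ forces $P$. By \cref{prop:j-forces-P-iff} we conclude $\oracle{P} \leq \j$.

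Both steps are routine. The only point needing care is that eliminating the existential in the first step is legitimate because the target $\oracle{P} \ap s$ lies in $\Prop$. The argument never uses any closure properties of $\mathcal{i}_P$; it does not matter that $\mathcal{i}_P$ is not itself a modality.
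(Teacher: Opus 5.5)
Your proof is correct, but in the second half it takes a slightly different route from the paper. The paper does not go through \cref{prop:j-forces-P-iff}. Instead it verifies the prefixed-point conditions for $\j \ap s$ directly, using the hypothesis $\mathcal{i}_P \leq \j$ at the proposition $\j \ap s$: if $P \ap a \lthen \j \ap s$ then $\mathcal{i}_P \ap (\j \ap s)$, hence $\j \ap (\j \ap s)$, hence $\j \ap s$ by idempotency. You instead use the hypothesis only at the propositions $P \ap a$, where $\mathcal{i}_P \ap (P \ap a)$ holds trivially via the identity instance. From this you conclude that $\j$ forces $P$, and \cref{prop:j-forces-P-iff} then does the induction.

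Your decomposition is more modular. It also makes explicit that, for a modality $\j$, being above $\mathcal{i}_P$ is equivalent to forcing $P$ (the converse follows from monotonicity), so the proposition is essentially a restatement of \cref{prop:j-forces-P-iff}. The paper's argument is self-contained and uses slightly fewer of the modality axioms: it needs only that $\j$ is inflationary and idempotent. Your route relies on monotonicity inside the proof of \cref{prop:j-forces-P-iff}, which compensates for using the hypothesis only at the weaker instances $P \ap a$.

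For the first half, your explicit witness $\ask \ap a \ap (\prf \circ h)$ is exactly what the paper leaves as ``obvious''.
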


\begin{proof}
  It is obvious that $\mathcal{i}_P \leq \oracle{P}$.
  Suppose $\j$ is a modality and $\mathcal{i}_P \leq \j$.
  For any $s : \Prop$, we check $\oracle{P} \ap s \leq \j \ap s$ by verifying the prefixed-point conditions:
  $s \lthen \j \ap s$ because~$\j$ is inflationary, and if $a : A$ satisfies $P \ap a \lthen \j \ap s$ then $\mathcal{i}_P \ap (\j \ap s)$ holds, hence $\j(\j \ap s)$ and $\j \ap s$ do as well.
\end{proof}

A category theorist would explain \cref{prop:oracle-least-above-instance} by noting that $\mathcal{i}_P$ is
the $\Prop$-valued polynomial functor for the propositional container~$P$, and that $\oracle{P}$ is the free monad generated on it~\cite{barr:coequalizers}.
In fact, the free monad is explicitly given as the least fixed-point of $f^P_s$ from \eqref{eq:oracle-map}.

\subsection{Every modality is an oracle modality}
\label{sec:every-modality-oracle}

We next study the relationship between predicates and modalities from a category-theoretic perspective.
The two categories of interest are the propositional containers~$\PCont$ and the frame of modalities~$\Mod$.
First, the mapping from the former to the latter is functorial.

\begin{proposition}
  The mapping $\oracle{}$ is a functor from $\PCont$ to $\Mod$.
\end{proposition}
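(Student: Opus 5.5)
Since $\Mod$ is a poset (indeed a frame), functoriality reduces to a single claim: whenever there is a morphism of propositional containers $\cont{t}{q} : \cont{A}{P} \to \cont{B}{Q}$, we have $\oracle{P} \leq \oracle{Q}$. Preservation of identities and composition is then automatic, because any two parallel arrows in a poset are equal. So the plan is to fix such a morphism, with $t : A \to B$ and $q : \dprod{a \of A} Q (t \ap a) \to P \ap a$, and prove the inequality.

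To prove $\oracle{P} \leq \oracle{Q}$ I will use \cref{cor:oracle-leq}. It suffices to show $\all{a \of A} \oracle{Q}(P \ap a)$. Given $a : A$, I ask the oracle for $Q$ the single query $t \ap a$. This gives the proof
\[
  \ask \ap (t \ap a) \ap \bigl(\lam{v} \prf \ap (q \ap a \ap v)\bigr) : \oracle{Q}(P \ap a).
\]
Equivalently, $\mathcal{i}_Q(P \ap a)$ holds, witnessed by $t \ap a$ together with the implication $Q(t \ap a) \lthen P \ap a$ supplied by $q \ap a$. This shows that $\oracle{Q}$ forces $P$, and \cref{prop:j-forces-P-iff} then gives $\oracle{P} \leq \oracle{Q}$.

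There is no real obstacle. The only points to be careful about are that morphisms of propositional containers run contravariantly on positions, which is exactly the direction needed to turn an answer for $Q$ into an answer for $P$, and that morphisms in $\Mod$ are just instances of the pointwise order. Because $\Mod$ is thin, equality of composites is immediate, and we also obtain for free that $\oracle{}$ factors through the preorder reflection of $\PCont$.
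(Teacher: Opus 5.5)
Your proof is correct and is the paper's argument: both use \cref{cor:oracle-leq} to reduce $\oracle{P} \leq \oracle{Q}$ to $\all{a \of A} \oracle{Q}(P \ap a)$, and both discharge it with the single-query term $\ask \ap (t \ap a) \ap (\lam{v} \prf \ap (q \ap a \ap v))$. Your remark that identities and composition hold automatically because $\Mod$ is thin spells out what the paper leaves implicit.
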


\begin{proof}
  Given a morphism $\cont{f}{g} : \cont{A}{P} \to \cont{B}{Q}$ of propositional
  containers, we apply \cref{cor:oracle-leq} to reduce $\oracle{P} \leq \oracle{Q}$ to
  $\all{a \of A} \oracle{Q} \ap (P \ap a)$, which is proved by $\lam{a} \ask \ap (f \ap a) \ap (\lam{u} \prf \ap (g \ap a \ap u))$.
\end{proof}

When looking for a modality that is not an oracle modality, it occurred to us that
there isn't one, because there is a functor in the opposite direction.
Given a modality $\j$, let $\DProp_\j \defeq \dsum{s \of \Prop} \j \ap s$ be the type of \defemph{$\j$-dense propositions}.

\begin{proposition}
  Given a modality~$\j$, define $\predicate{\j}$ to be the propositional container $\cont{\DProp_\j}{I_\j}$, where $I_\j : \DProp_\j \to \Prop$ is the first projection $I_\j (s, \_) \defeq s$.
  The mapping $\predicate{}$ is a functor from $\Mod$ to $\PCont$.
\end{proposition}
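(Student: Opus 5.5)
To show that $\predicate{}$ is a functor, it suffices to define its action on morphisms; the functor laws then come for free. Since $\Mod$ is a poset, it has at most one morphism between any two objects. So I only need to send each inequality $\j \leq \k$ to a container morphism $\predicate{\j} \to \predicate{\k}$. Because the positions are propositions, any two container morphisms with the same shape map are equal: their position components are equal by function extensionality and uniqueness of proofs. Hence identities and composites will be preserved once the shape maps compose correctly, and the shape maps I choose will be identities on first components.

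Concretely, assume $\j \leq \k$. I define the shape map $t : \DProp_\j \to \DProp_\k$ by $t \ap (s, u) \defeq (s, h \ap s \ap u)$, where $h : \all{p \of \Prop} \j \ap p \lthen \k \ap p$ witnesses $\j \leq \k$. In words, a $\j$-dense proposition is also $\k$-dense. For the position map I need $q : \dprod{(s,u) \of \DProp_\j} I_\k (t \ap (s,u)) \to I_\j (s,u)$, which unfolds to $s \to s$, so I take the identity. This gives the morphism $\cont{t}{q} : \predicate{\j} \to \predicate{\k}$.

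Functoriality then amounts to checking two things. First, the identity inequality goes to the identity morphism. Second, composite inequalities go to composite morphisms. In both cases the shape maps agree: the first components are literally equal, and the second components are proofs of $\k \ap s$, so they are equal by uniqueness of proofs, with equality of dependent pairs following. The position maps are functions into propositions and agree by function extensionality. I do not expect any real obstacle here. The only subtle point is making sure the position map goes in the contravariant direction required by the definition of container morphisms, and it does: since $I_\j$ and $I_\k$ are both the first projection, it is trivially the identity.
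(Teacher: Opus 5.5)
Your proposal is correct and follows the paper's proof: the shape map is the identity on the first component and uses $\j \leq \k$ on the second, and the position map is the identity $s \to s$. Functoriality follows because first components are preserved. You spell out the uniqueness-of-proofs and function-extensionality argument that the paper compresses into ``$h$ acts as identity on the first component and $\leq$ is transitive.''
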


\begin{proof}
  Given modalities $\j$ and $\k$, such that $\j \leq \k$, let $h
  : \DProp_\j \to \DProp_\k$ act as identity on the first component and use $\j \leq \k$ on the second component. It is immediate that $\all{(s,u) \of \DProp_\j}
  I_\k \ap (h \ap (s, u)) \lthen I_\j \ap (s,u)$, as it is just $\all{(s,u) \of \DProp_\j} s
  \lthen s$. Functoriality holds because $h$ acts as identity on the
  first component and~$\leq$ is transitive.
\end{proof}

The functors $\oracle{}$ and $\predicate{}$ are adjoint, provided we squash the morphisms in~$\PCont$.
Let $\PCont_\leq$ be the preorder reflection of~$\PCont$, i.e., $(\cont A P) \leq (\cont B Q)$ when there \emph{exists} a morphism $\cont A P \to \cont B Q$.

\begin{theorem}
  When $\oracle{}$ and $\predicate{}$ are construed as functors between $\PCont_\leq$ and $\Mod$,
  $\oracle{}$ is left adjoint to~$\predicate{}$.
  Moreover, $\predicate{}$ is a section of~$\oracle{}$.
\end{theorem}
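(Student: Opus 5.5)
Since $\Mod$ and $\PCont_\leq$ are both preorders, the adjunction reduces to a single equivalence, and functoriality and naturality are automatic. It suffices to show that for every propositional container $P : A \to \Prop$ and every modality $\j$,
\begin{equation*}
  \oracle{P} \leq \j \quad\liff\quad (\cont{A}{P}) \leq (\cont{\DProp_\j}{I_\j}),
\end{equation*}
and that $\oracle{\predicate{\j}} = \j$, which is what ``section'' means in the frame $\Mod$.

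For the adjunction, I would pass through \cref{prop:j-forces-P-iff}, which says that $\oracle{P} \leq \j$ holds precisely when $\j$ forces $P$, i.e.\ $\all{a \of A} \j(P \ap a)$. It then remains to show that $\j$ forces $P$ iff a morphism $\cont{A}{P} \to \cont{\DProp_\j}{I_\j}$ merely exists.

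If $\j$ forces $P$, take $t \ap a \defeq (P \ap a, u_a)$, where $u_a : \j(P \ap a)$. This requires a function $A \to \dsum{s \of \Prop} \j \ap s$ built from the proof-irrelevant witness $\lam{a} u_a$. That is legitimate because $\j(P \ap a)$ is itself a proposition and we are merely packaging a given dependent function, so no choice is needed. For the position map we use the identity $P \ap a \to P \ap a$.

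Conversely, suppose a morphism $\cont{t}{q}$ exists with $t \ap a = (s_a, v_a)$, where $v_a : \j \ap s_a$ and $q \ap a : s_a \to P \ap a$. Monotonicity of $\j$ gives $\j \ap s_a \lthen \j(P \ap a)$. Since the goal is a proposition, we may eliminate the mere existence of the morphism.

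For the section property, the direction $\oracle{\predicate{\j}} \leq \j$ is the counit of the adjunction just proved, applied to the identity $\predicate{\j} \leq \predicate{\j}$. Concretely, $\j$ forces $I_\j$ because $\j(I_\j(s,u)) = \j \ap s$ holds by $u$. For $\j \leq \oracle{\predicate{\j}}$, given $s$ with $\j \ap s$, form the query $(s, u) : \DProp_\j$ and observe $\ask \ap (s,u) \ap (\lam{w} \prf \ap w) : \oracle{I_\j} \ap s$. In other words, the single-query map satisfies $\mathcal{i}_{I_\j} \ap s$ whenever $\j \ap s$, and $\mathcal{i}_{I_\j} \leq \oracle{I_\j}$ by \cref{prop:oracle-least-above-instance}.

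The main obstacle I anticipate is bookkeeping around size and truncation rather than mathematics. First, $\DProp_\j$ must live in $\Type$ so that $\predicate{\j}$ is a container in the same universe. This is fine since $\Prop : \Type$ and the construction is impredicative. Second, the preorder reflection uses mere existence, so every step eliminating it must target a proposition. All the targets ($\oracle{P} \leq \j$ and the forcing statements) are propositions, so the argument goes through.
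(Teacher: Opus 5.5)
Your proposal is correct and follows essentially the same route as the paper's proof. Both reduce the adjunction via \cref{prop:j-forces-P-iff} to $\j$ forcing $P$, then use $a \mapsto (P \ap a, u_a)$ with identity position maps in one direction and monotonicity of $\j$ in the other; both obtain $\j = \oracle{\predicate{\j}}$ from the fact that $\j$ forces $\predicate{\j}$, together with the observation that $(s,u)$ serves as a single query witnessing $\oracle{\predicate{\j}} \ap s$.
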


\begin{proof}
  Given a propositional container $\cont{A}{P}$ and a modality $\j$, we need to
  check that there is a morphism $\cont{A}{P} \to \predicate{\j}$ if, and only if, $\oracle{P} \leq \j$.

  If $\oracle{P} \leq \j$, then we define $f : A \to \DProp_\j$
  by $f \ap a \defeq (P \ap a, u)$, where $u : \j \ap (P a)$ is provided by
  \cref{prop:j-forces-P-iff} because $\j$ forces~$P$. It is immediate
  that $\all{a \of A} I_\j \ap (f \ap a) \lthen P \ap a$, which is just $\all{a \of A} P \ap a
  \lthen P \ap a$.

  If there exists a morphism $\cont{f}{g} : \cont{A}{P} \to \predicate{\j}$ of
  propositional containers, then by \cref{prop:j-forces-P-iff} it suffices to prove $\all{a \of A} \j \ap (P \ap a)$.
  So consider any $a \of A$, and let $(s, u) \defeq f \ap a$ so that $s : \Prop$ and $u : \j \ap s$.
  Now $g \ap a$ establishes $s \lthen P \ap a$, hence $\j \ap (P \ap a)$ by monotonicity of~$\j$.

  It remains to show that~$\j = \oracle{\predicate{\j}}$.
  The inequality $\oracle{\predicate{\j}} \leq \j$ holds by
  \cref{prop:j-forces-P-iff} as~$\j$ forces~$\predicate{\j}$.
  To show $\j \leq \oracle{\predicate{\j}}$, consider any $s : \Prop$ and
  $u : \j \ap s$. Then $\oracle{\predicate{\j}} \ap s$ holds because $(s, u)$ witnesses $\some{(t,\_)
  \of \DProp_\j} (t \lthen \oracle{\predicate{\j}} \ap s)$.
\end{proof}

\begin{corollary}
  \label{cor:every-modality-oracle}
  Every modality is an oracle modality.
\end{corollary}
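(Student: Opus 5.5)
The corollary follows directly from the last part of the preceding theorem, namely that $\predicate{}$ is a section of $\oracle{}$, i.e.\ $\j = \oracle{\predicate{\j}}$ for every modality $\j$. So, given an arbitrary modality $\j$, I will exhibit the predicate $I_\j : \DProp_\j \to \Prop$, the first projection from the type of $\j$-dense propositions. I will then observe that $\j$ is equal to the oracle modality it generates.

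For completeness I will recall the two inequalities behind $\j = \oracle{\predicate{\j}}$. First, $\oracle{\predicate{\j}} \leq \j$ holds by \cref{prop:j-forces-P-iff}, because $\j$ forces $I_\j$: for $(s,u) : \DProp_\j$ we have $u : \j \ap s$, which is exactly $\j(I_\j(s,u))$. Second, $\j \leq \oracle{\predicate{\j}}$ holds because, given $u : \j \ap s$, the term $\ask \ap (s,u) \ap (\lam{v} \prf \ap v)$ is an element of $\oracle{I_\j} \ap s$. Equality of the two modalities as maps $\Prop \to \Prop$ then follows from function extensionality together with propositional extensionality, both of which are among the assumed principles.

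There is no real obstacle. The only point worth stating is the universe level: $\DProp_\j$ quantifies over $\Prop$, but because $\Prop$ is impredicative, $\DProp_\j$ is an ordinary type. Hence $I_\j$ is a legitimate propositional container and the oracle is specified in the same setting as the original $\j$.
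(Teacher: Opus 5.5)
Your proof is correct and follows the paper's argument: the corollary is read off from $\j = \oracle{\predicate{\j}}$ in the preceding theorem, and your two inequalities match the paper's proof. The first goes via \cref{prop:j-forces-P-iff}, and the second via $\ask \ap (s,u) \ap (\lam{v} \prf \ap v)$, which is the paper's witness $(s,u)$ written with the constructor instead of the fixed-point form. One small correction to your side remark: impredicativity is not what makes $\DProp_\j = \dsum{s \of \Prop} \j \ap s$ a type, since that $\Sigma$-type lives one level above $\Prop$ regardless; what impredicativity guarantees is that $\oracle{I_\j} \ap s$ is still a proposition even though its $\ask$ constructor quantifies over this large type.
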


\begin{proof}
  A modality~$\j$ coincides with the oracle modality $\oracle{\predicate{\j}}$.
\end{proof}

Thus, a modality~$\j$ is the least oracle modality forcing $\j$-dense propositions.
Note that there are many predicates whose induced oracle modality
equals~$\j$, some of which might be more convenient to work with.
For instance, in \cref{exa:modality-lem} we saw that double negation is the Excluded Middle oracle modality; it seems easier to work with an oracle that decides propositions than one that removes double negations from $\neg\neg$-dense propositions.

The adjunction gives us a recipe for computing suprema of nuclei, whose direct computation is complicated~\cite{escardo03:_joins_frame_nuclei}, in terms of coproducts of (propositional) containers, which is
straightforward~\cite{AbbottAG:Containers}.

\begin{proposition}
  \label{prop:oracle-sup}
  Let $P : \dprod{i \of I} (A_i \to \Prop)$ be an $I$-indexed family of
  predicates. The supremum of $\oracle{P \ap i}$'s is the oracle modality for the
  predicate $\Sigma \ap P : (\dsum{i \of I} A_i) \to \Prop$ defined by $(\Sigma \ap P)
  (i, a) \defeq P \ap i \ap a$.
\end{proposition}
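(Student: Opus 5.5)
The plan is to verify the two halves of the supremum property directly in the frame~$\Mod$: that $\oracle{\Sigma \ap P}$ is an upper bound of the $\oracle{P \ap i}$'s, and that it lies below every other upper bound. Both halves reduce, via \cref{prop:j-forces-P-iff} and \cref{cor:oracle-leq}, to statements about which modalities force which predicates. Alternatively, one could note that $\Sigma \ap P$ is the coproduct of the $P \ap i$'s in $\PCont$, hence also in $\PCont_\leq$, and invoke that left adjoints preserve colimits. However, I would give the elementary argument, since it is short and avoids checking coproducts in the preorder reflection.

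\textbf{Upper bound.} For each $i : I$ we need $\oracle{P \ap i} \leq \oracle{\Sigma \ap P}$. By \cref{cor:oracle-leq} this amounts to $\all{a \of A_i} \oracle{\Sigma \ap P} (P \ap i \ap a)$. We witness it by $\ask \ap (i, a) \ap (\lam{u} \prf \ap u)$, using that $(\Sigma \ap P)(i,a)$ is definitionally $P \ap i \ap a$. Equivalently, this is the functor $\oracle{}$ applied to the coproduct injection $\cont{A_i}{P \ap i} \to \cont{\dsum{i \of I} A_i}{\Sigma \ap P}$.

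\textbf{Least.} Let $\j$ be a modality with $\oracle{P \ap i} \leq \j$ for all $i$. By \cref{prop:j-forces-P-iff}, $\j$ forces each $P \ap i$, so $\all{i \of I} \all{a \of A_i} \j (P \ap i \ap a)$. This is literally the statement that $\j$ forces $\Sigma \ap P$, obtained by eliminating the pair $(i,a)$. Applying \cref{prop:j-forces-P-iff} again gives $\oracle{\Sigma \ap P} \leq \j$.

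There is no real obstacle. The only point needing care is that the supremum in $\Mod$ is characterized purely by the order, so establishing the upper bound and least properties suffices, with no need to compute the supremum of nuclei explicitly (the difficulty alluded to in the text). Dependent elimination on $\dsum{i \of I} A_i$ is available because the goal $\j((\Sigma \ap P)\ap x)$ is indexed by an element of a type rather than extracted from a proposition.
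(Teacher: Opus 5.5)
Your argument is correct, but it takes a different route from the paper. The paper's proof is the one-liner you list as an alternative: $\oracle{}$ is left adjoint to $\predicate{}$ on $\PCont_\leq$, and the container coproduct $\cont{(\dsum{i \of I} A_i)}{(\Sigma \ap P)}$ reflects to the join in $\PCont_\leq$. Your proof is that adjunction argument unfolded through \cref{prop:j-forces-P-iff}, which is also how the adjunction itself is proved. The difference is that you test the least-upper-bound property only against objects of the form $\predicate{\j}$.

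The paper's version buys the conceptual slogan that $\oracle{}$ turns sums into suprema. Yours buys self-containment and, more substantively, independence from choice. The step hidden in your ``hence also in $\PCont_\leq$'' is delicate. In $\PCont_\leq$, $\leq$ records only the mere existence of a morphism, so deriving $\Sigma \ap P \leq Y$ from $P \ap i \leq Y$ for every $i$ requires choosing a morphism for all $i$ at once.

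For instance, let each $P \ap i$ be the constantly false predicate on $\one$ and $Y$ the constantly false predicate on a type $B$. The claim then reads $(\all{i \of I} \some{b \of B} \top) \lthen \some{f \of I \to B} \top$. This is not valid in sheaves on $\mathbb{C}$, taking $I$ to be the truth value of the open $\mathbb{C} \setminus \set{0}$ and $B(W)$ the continuous square roots of $z$ on $W \setminus \set{0}$.

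For targets $\predicate{\j}$, by contrast, the morphism $a \mapsto (P \ap i \ap a, \_)$ is canonical. That is exactly why your direct verification needs no choice and is valid under the paper's stated assumptions.
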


\begin{proof}
  Immediate, because $\oracle{}$ is a left adjoint and the coproduct $\cont{(\dsum{i \of I}
  A_i)}{(\Sigma \ap P)}$ of containers reflects to the supremum in~$\PCont_{\leq}$.
\end{proof}


\section{Sheaves for oracle modalities}
\label{sec:sheaves}

We now explore sheaves for oracle modalities. Our first goal is a type-theoretic characterization of sheaves for a general modality that can be usefully specialized to an oracle modality.

In a topos, an object $X$ is a $\j$-sheaf when for every $\j$-dense mono $m : B \to A$ the map ${-} \circ m : (A \to X) \to (B \to X)$ is an isomorphism~\cite[\S5.2]{lane92:_sheav_geomet_logic}.
As every such mono is isomorphic to the first projection $(\dsum{a \of A} P \ap a) \to A$, for some $\j$-dense predicate $P : A \to \Prop$, the sheaf condition can be stated in terms of precomposing with such projections.
After currying the projection $(\dsum{a \of A} P \ap a) \to A$ to $\dprod{a \of A} (P \ap a \to X)$, we see that the map required to be an equivalence is $(\lam{f \ap a \ap u} f \ap a) : (A \to X) \to \dprod{a \of A} (P \ap a \to X)$.

Since we work internally in type theory, we can make further simplifications.

\begin{proposition}
  \label{prop:sheaf-tfae}%
  Given a modality~$\j$ and a type~$X$, the following are equivalent:
  \begin{enumerate}
  \item
    For every $\j$-dense $P : A \to \Prop$, the map
    $(\lam{f \ap a \ap u} f \ap a) : (A \to X) \to \dprod{a \of A} P \ap a \to X$ is an equivalence.
  \item For every $\j$-dense $s : \Prop$, the map
    $(\lam{x \ap u} x) : X \to (s \to X)$ is an equivalence.
  \item The following conditions hold:
    \begin{enumerate}
    \item \label{itm:sheaf-e} there is a map $e : \dprod{s \of \Prop} \j \ap s \to (s \to X) \to X$,
    \item \label{itm:sheaf-e-eq} $e \ap s \ap t \ap h = h \ap u$ for all $s : \Prop$ and $t : \j \ap s$ and $h : s \to X$ and $u : s$, and
    \item \label{itm:sheaf-xy-eq} if $s \lthen x = y$ then $x = y$, for all $\j$-dense $s : \Prop$ and $x, y : X$.
    \end{enumerate}
  \end{enumerate}
  When these statements hold, we say that $X$ is a \defemph{$\j$-sheaf}.
\end{proposition}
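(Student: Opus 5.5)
I would prove the cycle (1)$\Rightarrow$(2)$\Rightarrow$(3)$\Rightarrow$(1). Throughout, ``equivalence'' means a map with a two-sided inverse. We rely on function extensionality, uniqueness of proofs and propositionality of equality; note that $s \to X$ is a function type, so it is not required that $X$ be a proposition.

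\emph{(1)$\Rightarrow$(2).} Given a $\j$-dense $s : \Prop$, I would take $A \defeq \one$ and $P \ap \star \defeq s$. This $P$ is $\j$-dense. Under the evident equivalences $(\one \to X) \simeq X$ and $\dprod{a \of \one} (P \ap a \to X) \simeq (s \to X)$, the map in (1) becomes $\lam{x \ap u} x$. Hence that map is an equivalence.

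\emph{(2)$\Rightarrow$(3).} For each $s$ and $t : \j \ap s$, let $e \ap s \ap t$ be the inverse of $\lam{x \ap u} x$ supplied by (2); this defines $e$ as required in (a). For (b), let $h : s \to X$ and $u : s$. The inverse property gives $\lam{v} e \ap s \ap t \ap h = h$, and evaluating both sides at $u$ yields $e \ap s \ap t \ap h = h \ap u$. For (c), suppose $s$ is $\j$-dense and $s \lthen x = y$. Then the constant maps $\lam{u} x$ and $\lam{u} y$ from $s$ to $X$ agree pointwise, so they are equal by function extensionality. Since $\lam{x \ap u} x$ is injective, being an equivalence, we get $x = y$.

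\emph{(3)$\Rightarrow$(1).} This is the main step. Let $P$ be $\j$-dense and write $d : \all{a \of A} \j (P \ap a)$ for the density witness. Given $g : \dprod{a \of A} P \ap a \to X$, define the inverse by $g \mapsto \lam{a} e \ap (P \ap a) \ap (d \ap a) \ap (g \ap a)$.
\begin{itemize}
\item \emph{The inverse is a right inverse.} We must show $e \ap (P \ap a) \ap (d \ap a) \ap (g \ap a) \ap$ agrees with $g \ap a \ap u$ for every $u : P \ap a$. This is exactly (b), and function extensionality then gives equality of functions.
\item \emph{The inverse is a left inverse.} For $f : A \to X$ we must show $e \ap (P \ap a) \ap (d \ap a) \ap (\lam{u} f \ap a) = f \ap a$. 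Here I would use (c) with the $\j$-dense proposition $P \ap a$. Indeed, assuming $u : P \ap a$, clause (b) gives $e \ap (P \ap a) \ap (d \ap a) \ap (\lam{u} f \ap a) = f \ap a$. So $P \ap a \lthen {}$(the desired equality), and (c) removes the hypothesis.
\end{itemize}

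The only subtlety I expect is bookkeeping. One point is to make sure that (c) is applied to a proposition that is actually $\j$-dense, namely $P \ap a$ via $d \ap a$. The other is that no dependence on the particular proof $t : \j \ap s$ arises, which is guaranteed by uniqueness of proofs in $\Prop$.
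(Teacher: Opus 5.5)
Your proof is correct and follows essentially the same route as the paper. You specialize to $A = \one$, read off $e$ from the inverse and clause (c) from injectivity, and build the inverse in (3)$\Rightarrow$(1) from $e$, using (b) for one composite and (c) on the dense proposition $P\,a$ for the other. The only cosmetic difference is that you use two-sided inverses where the paper uses contractible fibers (interchangeable here since UIP makes all types sets), so your left-inverse step combines (b) with (c) where the paper gets injectivity directly from (c).
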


\begin{proof}
  The first statement instantiated at $A = \one$ yields the second statement.

  The third statement is just an elaboration of the second statement. Indeed, given a $\j$-dense $s : \Prop$, the map $(\lam{x \ap u} x) : X \to (s \to X)$ is an equivalence when its fibers are contractible. The map~$e$ in the third statement inhabits the fibers, while the second condition states that every fiber has at most one element, because $\lam{(u \of s)} x = \lam{(u \of s)} y$ is equivalent to $s \lthen x = y$.

  We assume the third statement and prove the first one for a given $\j$-dense $P : A \to \Prop$.
  \Cref{itm:sheaf-e-eq} implies that the fibers of $\lam{f \ap a \ap u} f \ap a$ are inhabited by the values of $\lam{h \ap a} e \ap (P \ap a) \ap \_ \ap (h \ap a)$.
  %
  To see that each fiber has at most one element, consider $f, g : A \to X$ such that $\lam{a \ap u} f \ap a = \lam{a \ap u} g \ap a$. Then for any $a : A$ we have
  $\lam{u} f \ap a = \lam{u} g \ap a$, hence $P \ap a \lthen f \ap a = g \ap a$ and $f \ap a = g \ap a$ by \cref{itm:sheaf-xy-eq}. We conclude $f = g$ by function extensionality.
\end{proof}
 
The following theorem specializes the third statement in the lemma to oracle modalities in a way that avoids explicit reference to the modality, which simplifies manipulation of oracle sheaves.

\begin{theorem}
  \label{thm:oracle-sheaf-when}%
  A type $X$ is an $\oracle{P}$-sheaf for $P : A \to \Prop$ if, and only if:
  \begin{enumerate}
  \item \label{itm:oracle-d} there is a map $d : \dprod{a \of A} (P \ap a \to X) \to X$,
  \item \label{itm:oracle-d-eq} $d \ap a \ap h = h \ap u$ for all $a : A$ and $h : P \ap a \to X$ and $u : P \ap a$, and
  \item \label{itm:oracle-xy-eq} if $P \ap a \lthen x = y$ then $x = y$, for all $a : A$ and $x, y : X$.
  \end{enumerate}
\end{theorem}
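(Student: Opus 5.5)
We compare the three conditions of the theorem with the third characterization in \cref{prop:sheaf-tfae}, instantiated at $\j = \oracle{P}$.

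\emph{Necessity.} Suppose $X$ is an $\oracle{P}$-sheaf, with $e$ as in \cref{itm:sheaf-e}. Set $d \ap a \ap h \defeq e \ap (P \ap a) \ap t_a \ap h$, where $t_a \defeq \ask \ap a \ap (\lam{u} \prf \ap u) : \oracle{P}(P \ap a)$ witnesses that $P \ap a$ is $\oracle{P}$-dense. Then \cref{itm:oracle-d-eq} is an instance of \cref{itm:sheaf-e-eq}, and \cref{itm:oracle-xy-eq} is an instance of \cref{itm:sheaf-xy-eq} with $s \defeq P \ap a$.

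\emph{Sufficiency.} Assume $d$ with \cref{itm:oracle-d-eq,itm:oracle-xy-eq}. We first prove \cref{itm:sheaf-xy-eq}. Since $x = y$ is a proposition, we show $\oracle{P} \ap s \lthen (s \lthen x = y) \lthen x = y$ by the prefixed-point conditions~\eqref{eq:f-prefixed}, taking $r \defeq (s \lthen x = y) \lthen x = y$. The $\prf$ case is immediate. For the $\ask$ case, suppose we have $a$ with $P \ap a \lthen r$ and $s \lthen x = y$. Then $P \ap a \lthen x = y$, so $x = y$ by \cref{itm:oracle-xy-eq}.

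The main obstacle is constructing $e$, because an element of $\oracle{P} \ap s$ lives in $\Prop$ and cannot be eliminated into the type $X$. We use unique choice instead. Fix $s$, $t : \oracle{P} \ap s$ and $h : s \to X$, and prove $\someone{x \of X} \Phi(x)$ with $\Phi(x) \defeq \all{u \of s} x = h \ap u$. Uniqueness follows from the \cref{itm:sheaf-xy-eq} just established: if $\Phi(x)$ and $\Phi(y)$, then $s \lthen x = y$ with $s$ dense, so $x = y$. For existence, we prove $\some{x \of X} \Phi(x)$ by induction on $t$, i.e.\ via the prefixed-point conditions with $r \defeq \some{x} \Phi(x)$.
\begin{itemize}
\item In the $\prf \ap u$ case, take $x \defeq h \ap u$, using uniqueness of proofs of $s$.
\item In the $\ask$ case, we have $a$ and $P \ap a \lthen \some{x} \Phi(x)$. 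By uniqueness, for each $v : P \ap a$ there is a unique such $x$, so unique choice gives $g : P \ap a \to X$ with $\Phi(g \ap v)$ for all $v$. Take $x \defeq d \ap a \ap g$.
\end{itemize}

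To verify $\Phi(d \ap a \ap g)$ in the $\ask$ case, fix $u : s$. For any $v : P \ap a$ we have $d \ap a \ap g = g \ap v = h \ap u$ by \cref{itm:oracle-d-eq}. Hence $P \ap a \lthen d \ap a \ap g = h \ap u$, and \cref{itm:oracle-xy-eq} gives the equality. Definite description then yields $e \ap s \ap t \ap h$ with $\Phi$, which is exactly \cref{itm:sheaf-e-eq}. Together these give the third statement of \cref{prop:sheaf-tfae}, so $X$ is an $\oracle{P}$-sheaf.
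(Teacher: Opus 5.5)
Your proof is correct and follows the paper's route. Necessity obtains $d$ from $e$ at the dense proposition $P \ap a$, witnessed by $\ask \ap a \ap \prf$. Sufficiency extracts $e$ by unique choice from a unique-existence statement proved by induction on $t : \oracle{P} \ap s$, and gets the separation condition from a separate induction. The only difference is in the ordering: you prove separation first and use it for uniqueness, so that only existence needs the induction (with unique choice over $P \ap a$ in the $\ask$ case), which usefully fills in details the paper leaves to the Rocq formalization.
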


\begin{proof}
  If $X$ is an $\oracle{P}$-sheaf, we use~$e$ from \cref{prop:sheaf-tfae} to define
  $
    d \ap a \ap h \defeq e \ap (P \ap a) \ap (\ask \ap a \ap \prf) \ap h
  $.
  It is not hard to check that~$d$ satisfies \cref{itm:oracle-d-eq},
  while \Cref{itm:oracle-xy-eq} follows from \cref{itm:sheaf-xy-eq} in \cref{prop:sheaf-tfae}, as $P \ap a$ is $\oracle{P}$-dense.

  To prove the converse, we assume $X$ satisfies \cref{itm:oracle-d,itm:oracle-d-eq,itm:oracle-xy-eq} and show
  that it also satisfies \cref{itm:sheaf-e,itm:sheaf-e-eq,itm:sheaf-xy-eq} from \cref{prop:sheaf-tfae}.
  Given $s : \Prop$, and $t : \oracle{P} \ap s$, one is tempted to define $e \ap s \ap t$ by induction on~$t$,
  but this is forbidden as~$s \to X$ is not known to be a proposition. Instead, we prove by induction on~$t$ that
  $
    \all{h \of s \to X} \someone{x \of X} \all{u \of s} x = h \ap u
  $,
  and use unique choice to extract $e \ap s \ap t : (s \to X) \to X$ satisfying $e \ap s \ap t \ap h = h \ap u$ for all $h : s \to X$ and $u : s$, thereby also securing \cref{itm:sheaf-e-eq}. We omit the rather technical details, which we checked
  with Rocq.
  Finally, \Cref{itm:sheaf-xy-eq} follows from \cref{itm:oracle-xy-eq} and the next statement:
  given $q : \Prop$, if $\all{a \of A} (P \ap a \to q) \to q$ then $\all{s \of \Prop} \oracle{P} \ap s \to (s \to q) \to q$.
  This time we may use induction.
\end{proof}

Henceforth we refer to $\oracle{P}$-sheaves as \defemph{$P$-oracle sheaves} or just \defemph{$P$-sheaves}.
We refer to the map~$d$ from~\Cref{thm:oracle-sheaf-when} as the \defemph{structure map} of the sheaf~$X$,
and to \cref{itm:oracle-d-eq,itm:oracle-xy-eq} as the first and second \defemph{sheaf equality}, respectively.
The terminology is warranted because the theorem exhibits a sheaf as an algebra equipped with an operation~$d$ taking inputs $a : A$ and $h : P \ap a \to X$, and extracting the element that~$h$ would compute when given an answer $u : P \ap a$, but \emph{without} consulting the oracle. Such an~$X$ must be quite special when~$P$ is non-trivial.
The second sheaf equality, \Cref{itm:oracle-xy-eq}, similarly allows us to extract the equality $x = y$ from the hypothetical equality $P \ap a \lthen x = y$, again without consulting the oracle.
We also observe that being a sheaf is a property, not structure, as the map~$d$ is unique when it exists.

We reprove some known facts about sheaves to practice working with oracle sheaves.

\begin{corollary}
  \label{cor:propj-sheaf}%
  The type of \defemph{$\oracle{P}$-stable propositions} $\Prop_P \defeq \dsum{s \of \Prop} (\oracle{P} \ap s \lthen s)$ is a $P$-sheaf and it is equivalent to $\dsum{s \of \Prop} (\all{a \of A} (P \ap a \lthen s) \lthen s)$.
\end{corollary}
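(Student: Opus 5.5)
The plan is to show the equivalence of the two descriptions first, and then verify the three conditions of \cref{thm:oracle-sheaf-when} for the second one.

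\emph{Equivalence.} It suffices to show, for every $s : \Prop$,
\[
  (\oracle{P} \ap s \lthen s) \liff (\all{a \of A} (P \ap a \lthen s) \lthen s).
\]
Propositional extensionality then makes the two predicates on $\Prop$ equal. Uniqueness of proofs lets the second components be transported, so the two $\Upsigma$-types are equivalent.

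\begin{itemize}
\item Left to right: if $P \ap a \lthen s$, then $\ask \ap a \ap (\lam{u} \prf \ap (\ldots))$ gives $\oracle{P} \ap s$, and hence $s$.
\item Right to left: the hypothesis is exactly the prefixed-point condition for $r \defeq s$ in \eqref{eq:f-prefixed}; the other condition, $s \lthen s$, is trivial. So \eqref{eq:forcing-modality} yields $\oracle{P} \ap s \lthen s$.
\end{itemize}

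\emph{Sheaf conditions.} Write $X$ for the second description, whose elements are pairs $(s, \sigma)$ with $\sigma : \all{a \of A} (P \ap a \lthen s) \lthen s$.

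\begin{itemize}
\item \emph{Structure map.} Given $a : A$ and $h : P \ap a \to X$, set
\[
  d \ap a \ap h \defeq \bigl(\some{u \of P \ap a} \pi_1 (h \ap u),\ \sigma\bigr).
\]
Since $P \ap a$ is a proposition, this is the same as $\all{u \of P \ap a} \pi_1(h \ap u)$. I will use the $\forall$ form, $t \defeq \all{u \of P \ap a} \pi_1(h \ap u)$, because stability is easier to check for it. Suppose $\all{b} (P \ap b \lthen t) \lthen t$ and take $u : P \ap a$. To prove $\pi_1(h \ap u)$, use the stability of $h \ap u$. It remains to show $P \ap b \lthen \pi_1(h \ap u)$ for every $b$. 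From $P \ap b \lthen t$ and $u$ we get $P \ap b \lthen \pi_1(h \ap u)$ directly: instantiate $t$ at $u$.
\item \emph{First sheaf equality.} For $u : P \ap a$ we have $\all{u' \of P \ap a} \pi_1(h \ap u') \liff \pi_1(h \ap u)$, since every $u'$ equals $u$ by uniqueness of proofs. Propositional extensionality, together with proof irrelevance of the second components, gives $d \ap a \ap h = h \ap u$.
\item \emph{Second sheaf equality.} Suppose $P \ap a \lthen (s, \sigma) = (s', \sigma')$. Then $P \ap a \lthen (s \liff s')$. To show $s \lthen s'$, assume $s$ and apply $\sigma'$ at query $a$: we need $P \ap a \lthen s'$, which follows from the assumed $s$ and the implication. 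The other direction is symmetric. Hence $s = s'$, and the pairs are equal by uniqueness of proofs.
\end{itemize}

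\emph{Main obstacle.} The only step needing real care is the second sheaf equality. The point there is that the stability witnesses $\sigma$ and $\sigma'$ themselves supply the query $a$. Everything else is bookkeeping with propositional extensionality and uniqueness of proofs. The $P$-sheaf property then transfers to $\Prop_P$ along the equivalence.
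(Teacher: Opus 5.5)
Your proof is correct, and its ingredients are the paper's: the same structure map $d \ap a \ap h = (\all{u \of P \ap a} \pi_1 (h \ap u), \_)$, and the same two halves of the equivalence. Your appeal to \eqref{eq:forcing-modality} is the impredicative form of the paper's induction on $t : \oracle{P} \ap s$, and your $\ask$-term is its density argument.

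What differs is the order. The paper verifies stability of $d \ap a \ap h$ and both sheaf equalities directly on $\Prop_P$, leaving them as exercises in induction on modal proofs. You establish the equivalence first and then check everything on the single-query description, where no induction is needed: single-query stability is closed under $\forall$, and in the second sheaf equality the witness $\sigma'$ can be applied at the very query $a$ that carries the hypothesis. You thus get explicit, induction-free proofs of exactly the parts the paper omits, at the price of making the equivalence a prerequisite rather than an afterthought. The transfer back to $\Prop_P$ costs nothing, since function and propositional extensionality make the two $\Upsigma$-types equal.

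Two slips in the write-up should be fixed.

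\begin{itemize}
\item The claim that $\some{u \of P \ap a} \pi_1(h \ap u)$ ``is the same as'' $\all{u \of P \ap a} \pi_1(h \ap u)$ is false. The former implies $P \ap a$, while the latter holds vacuously when $P \ap a$ fails. Taking $\pi_1 (h \ap u) = \top$, the $\exists$-form becomes $P \ap a$, which is stable only if $P \ap a$ holds, so it is not even an element of $X$ in general. Define $d$ with $\forall$ from the outset.
\item ``Suppose $\all{b} (P \ap b \lthen t) \lthen t$'' literally assumes the goal. The intended step is to fix $b$ and assume $P \ap b \lthen t$. You then need $P \ap b \lthen \pi_1(h \ap u)$ only for that $b$, not for every $b$, to feed to the stability witness of $h \ap u$ at~$b$.
\end{itemize}
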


\begin{proof}
  The structure map $d : \all{a \of A} (P \ap a \to \Prop_P) \to \Prop_P$ is
  $d \ap a \ap h \defeq ((\all{u \of P \ap a} h \ap u), \_)$, where $\_$ should be filled with a proof of
  $\oracle{P} (\all{u \of P \ap a} h \ap u) \lthen \all{u \of P \ap a} h \ap u$. The proof, as well as the verification of sheaf equalities, are exercise in induction on proofs of modal statements.

  For the equivalence to be established, it suffices to show, for any $s : \Prop$, that $\oracle{P} \ap s \lthen s$ is equivalent to $\all{a \of A} (P \ap a \lthen s) \lthen s$.
  If $\all{a \of A} (P \ap a \lthen s) \lthen s$ and $t : \oracle{P} \ap s$, then $s$ follows by induction on~$t$: if $t = \prf \ap u$ then $u : s$, and  if $t = \ask \ap a \ap \kappa$ then the induction hypothesis states $P \ap a \lthen s$, therefore $s$ by assumption. The converse holds because $P \ap a$ is $\oracle{P}$-dense.
\end{proof}

Recall that $X$ is \defemph{$\j$-separated} when equality on~$X$ is $\j$-stable, $\all{x, y \of X} \j (x = y) \lthen x = y$.

\begin{corollary}
  \label{cor:P-sheaves-P-separated}%
  $P$-sheaves are $P$-separated.
\end{corollary}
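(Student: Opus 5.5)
We have to show that if $X$ is a $P$-sheaf, then $\oracle{P}(x = y) \lthen x = y$ holds for all $x, y : X$. The natural route uses the characterisation of sheaves in \cref{thm:oracle-sheaf-when}, specifically the second sheaf equality, \cref{itm:oracle-xy-eq}. Combined with the fact that $x = y$ is a proposition, this lets us argue by induction on the modal proof.

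Fix $x, y : X$ and let $q \defeq (x = y)$. Since equality is propositional, $q : \Prop$. The second sheaf equality says exactly that $\all{a \of A} (P \ap a \lthen q) \lthen q$. In other words, $q$ satisfies the prefixed-point condition for queries.

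Now take $t : \oracle{P} \ap q$ and prove $q$ by induction on $t$. Because the goal $q$ is a proposition, the elimination is allowed. There are two cases.

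\begin{itemize}
\item If $t = \prf \ap u$, then $u : q$ directly.
\item If $t = \ask \ap a \ap \kappa$, the induction hypothesis gives $P \ap a \lthen q$. The sheaf equality \cref{itm:oracle-xy-eq} then yields $q$.
\end{itemize}

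Equivalently, we can invoke the prefixed-point characterisation \eqref{eq:forcing-modality} with $r \defeq q$ and $s \defeq q$. The condition $q \lthen q$ is trivial, and the query condition is the second sheaf equality. This argument is also an instance of the equivalence shown in \cref{cor:propj-sheaf}, namely that $\all{a \of A}(P \ap a \lthen s) \lthen s$ implies $\oracle{P} \ap s \lthen s$, applied to $s \defeq (x = y)$.

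There is no real obstacle. The only point needing care is that the elimination out of $\oracle{P}\ap q$ must land in $\Prop$, and this holds because equality is propositional, one of the assumed principles.
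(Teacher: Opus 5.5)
Your proof is correct and is essentially the paper's own argument: induction on $t : \oracle{P}\ap(x = y)$, where the $\prf$ case is immediate and the $\ask$ case follows from the second sheaf equality applied to the induction hypothesis $P \ap a \lthen x = y$. You also rightly note that eliminating into $x = y$ is allowed because equality is propositional, which is what makes this induction legitimate.
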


\begin{proof}
  Let $X$ be a $P$-sheaf for a predicate $P : A \to \Prop$ and $x, y : X$ such that $t : \oracle{P} \ap (x = y)$.
  We prove $x = y$ by induction on~$t$. If $t = \prf \ap u$ then $u : x = y$.
  If $t = \ask \ap a \ap \kappa$, then the induction hypothesis states $P \ap a \to x = y$ and so $x = y$ by
  the second sheaf equality.
\end{proof}

The next corollary says that all maps between sheaves are homomorphisms.

\begin{corollary}
  \label{cor:sheaf-maps-homomorphisms}%
  Let $X$ and $Y$ be $P$-sheaves for $P : A \to \Prop$, with structure maps~$d_X$ and $d_Y$, respectively.
  Then $f (d_X \ap a \ap h) = d_Y \ap a \ap (f \circ h)$ for all $f : X \to Y$, $a : A$ and $h : P \ap a \to X$.
\end{corollary}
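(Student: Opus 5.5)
The plan is to use the second sheaf equality of $Y$ (\cref{thm:oracle-sheaf-when}, \cref{itm:oracle-xy-eq}) with the query $a$. It then suffices to show the hypothetical equality
\[
  P \ap a \lthen f (d_X \ap a \ap h) = d_Y \ap a \ap (f \circ h).
\]
Since equality is propositional, this is a legitimate proposition, and the sheaf equality gives the unconditional equation. This is the sense in which being a sheaf forces $f$ to respect the structure maps: neither side consults the oracle, but both agree whenever an answer is available.

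To establish the hypothetical equality, fix $u : P \ap a$ and rewrite each side using the first sheaf equality (\cref{itm:oracle-d-eq}). For $X$, we get $d_X \ap a \ap h = h \ap u$, so the left side becomes $f (h \ap u)$. For $Y$, applied to the map $f \circ h : P \ap a \to Y$, we get $d_Y \ap a \ap (f \circ h) = (f \circ h) \ap u = f (h \ap u)$. The two sides therefore coincide.

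I do not expect a real obstacle. The only point needing care is that the conclusion is stated without any proof of $P \ap a$ in hand. This is exactly why the argument goes through \cref{itm:oracle-xy-eq} rather than a direct computation: we discharge the assumption $u : P \ap a$ only inside the proof of the implication, which is allowed because its conclusion is an equality and hence lives in $\Prop$.
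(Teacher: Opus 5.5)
Your proposal is correct and follows the paper's proof: it applies the second sheaf equality of $Y$ at the query $a$. Then, assuming $u : P \ap a$, the first sheaf equality for $X$ and for $Y$ reduces both sides to $f \ap (h \ap u)$.
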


\begin{proof}
  We employ the second sheaf equality for~$Y$. If $u : P \ap a$ then the first sheaf equality implies $f \ap (d_X \ap a \ap h) = f \ap (h \ap u) = (f \circ h) \ap u = d_Y \ap a \ap (f \circ h)$.
\end{proof}

\begin{corollary}
  For any predicate $P : A \to \Prop$, $P$-sheaves are closed under dependent products.
\end{corollary}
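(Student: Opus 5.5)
We use the characterization of $P$-sheaves in \cref{thm:oracle-sheaf-when}. Let $B : \Type$ and $Y : B \to \Type$ be a family such that each $Y \ap b$ is a $P$-sheaf, with structure map $d_b : \dprod{a \of A} (P \ap a \to Y \ap b) \to Y \ap b$. We show that $\dprod{b \of B} Y \ap b$ is a $P$-sheaf by checking the three conditions of the theorem pointwise in $b$, and then reassembling with function extensionality.

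For the structure map we take
\[
  d \ap a \ap h \defeq \lam{b} d_b \ap a \ap (\lam{u} h \ap u \ap b),
\]
where $h : P \ap a \to \dprod{b \of B} Y \ap b$. This is a legitimate definition, since it uses no elimination from a proposition into a type.

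For the first sheaf equality, let $u : P \ap a$. For each $b$, the first sheaf equality of $Y \ap b$ gives $d_b \ap a \ap (\lam{u'} h \ap u' \ap b) = h \ap u \ap b$. Function extensionality then yields $d \ap a \ap h = h \ap u$.

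For the second sheaf equality, let $f, g : \dprod{b \of B} Y \ap b$ and suppose $P \ap a \lthen f = g$. For each $b$ we get $P \ap a \lthen f \ap b = g \ap b$ by applying the equality at $b$. The second sheaf equality of $Y \ap b$ then gives $f \ap b = g \ap b$, and function extensionality gives $f = g$.

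There is no substantial obstacle. The one point to check is that \cref{thm:oracle-sheaf-when} lets us treat the sheaf structure as mere data to be assembled componentwise. The structure map $d_b$ has to be chosen uniformly in $b$ from the hypothesis ``each $Y \ap b$ is a sheaf''. This is harmless because being a sheaf is a property and $d_b$ is unique. Concretely, we read the hypothesis as $\dprod{b \of B}(\text{sheaf structure on } Y \ap b)$. If instead it is stated propositionally, we extract the $d_b$ by unique choice, using the uniqueness of structure maps noted after the theorem.
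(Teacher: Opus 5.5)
Your proposal is correct and matches the paper's proof: both use \cref{thm:oracle-sheaf-when}, define the structure map index-wise as $\lam{b} d_b \ap a \ap (\lam{u} h \ap u \ap b)$, and derive both sheaf equalities componentwise using function extensionality. Your remark about choosing the $d_b$ uniformly via uniqueness of structure maps and unique choice is a correct extra detail that the paper leaves implicit.
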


\begin{proof}
  Algebras are closed under products. Specifically, if $X : I \to \Type$ is a family of sheaves with structure maps $d : \dprod{i \of I} \dprod{a \of A} (P \ap a \to X_i) \to X_i$, then $Y \defeq \dprod{i \of I} X_i$ is a $P$-sheaf whose structure map $e : \dprod{a \of A} (P \ap a \to Y) \to Y$ is defined index-wise by
  $e \ap a \ap h \ap i \defeq d_i \ap a \ap (\lam{u} h \ap u \ap i)$. The sheaf equations are derived index-wise, too.
\end{proof}

\begin{example}
  If $P : A \to \Prop$ has a counterexample $a : A$, so that $\neg P \ap a$, then the $P$-sheaves are precisely the contractible types. For if $X$ is a $P$-sheaf with structure map~$d$, then it is inhabited by $d \ap a \ap \_$, while
  the second sheaf equation instantiated at~$a$ yields $x = y$, for any $x, y : X$.
\end{example}

\begin{example}
  If $P : A \to \Prop$ has a section $f : \all{a \of A} P \ap a$, then every type~$X$ is a $P$-sheaf because
  its structure map can be constructed as $\lam{a \ap h} h \ap (f \ap a)$. The sheaf equations are easily checked.
  In other words, the section $f : \all{a \of A} P \ap a$ implements a $P$-oracle, so any oracle questions can be answered by just consulting $f$.
\end{example}

\begin{example}
  For any predicate $P : A \to \Prop$ and type $X$, the type $(\all{b \of A} P \ap b) \to X$ is a $P$-sheaf.
  To satisfy \cref{thm:oracle-sheaf-when}, define
  $d : \dprod{a \of A} (P \ap a \to (\all{b \of A} P \ap b) \to X) \to (\all{b \of A} P \ap b) \to X$ by
  $
    d \ap a \ap h \ap f \defeq h \ap (f \ap a) \ap f
  $.
  The two sheaf equations are easily checked.
  %
  %
  In fact, the general fact that a $\k$-sheaf is a $\j$-sheaf if $\j \leq \k$ is at work here,
  with the two modalities being $\oracle{P}$ and $\open{(\all{a \of A} P \ap a)}$ from~\cref{sec:open-modalities}.
\end{example}

\subsection{Sheafification}
\label{sec:sheafification}

In a topos, sheaves form a reflective subcategory~\cite[\S5.3]{lane92:_sheav_geomet_logic}. This is the case in type theory, too, provided the setup is rich enough to allow sheafification, which is described by the following universal property.

\begin{definition}
  \label{def:sheafification}%
  Given a modality~$\j$, the \defemph{$\j$-sheafification} of a type~$X$ is a $\j$-sheaf $\sheafify[\j]{X}$
  with a map $\eta_X : X \to \sheafify[\j]{X}$, such that for every $\j$-sheaf $Y$ and $f : X \to Y$ there
  is a unique map $f^\dagger : \sheafify[\j]{X} \to Y$ for which $f = f^\dagger \circ \eta_X$.
\end{definition}

The sheafification of~$X$ is unique up to isomorphism if it exists, but how do we construct it?
One way is to translate ``sheafification in logical form'' by Barbara Veit~\cite{veit81} from higher-order logic to type theory.
After some calculations, which rely on having an impredicative~$\Prop$, propositional extensionality, and proof-irrelevance, we find out that
\begin{equation}
  \label{eq:sheafify-logical-form}%
  \sheafify[\j]{X} \defeq \dsum{p : X \to \Prop_\j} \j (\some{x \of X} \all{y \of X} p \ap y \liff \j (x = y)).
\end{equation}
The formula says that the elements of $\sheafify[\j]{X}$ are the ``$\j$-modally $\j$-modal singletons of~$X$'', for erasing~$\j$ from it yields ordinary singleton subsets of~$X$.
Alternatively, if we treat $P$-sheaves as algebras in the sense of \cref{thm:oracle-sheaf-when}, then \cref{def:sheafification} says that $\sheafify[\j]{X}$ is the initial algebra generated by~$X$, which is constructed as follows.

\begin{definition}
  \label{def:sheafification-qit}%
  Given a predicate $P : A \to \Prop$, the \defemph{$P$-sheafification} of $X$ is the quotient-inductive type $\sheafify[P]{X}$, generated by the following constructors and equations:
  \begin{itemize}
  \item $\ret : X \to \sheafify[P]{X}$,
  \item $\ask : \dprod{a \of A} (P \ap a \to \sheafify[P]{X}) \to \sheafify[P]{X}$,
  \item $\ask \ap a \ap h = h \ap u$, for all $a : A$, $h : P \ap a \to \sheafify[P]{X}$, and $u : P a$, and
  \item if $P \ap a \lthen v = w$ then $v = w$, for all $a : A$ and $v, w : \sheafify[P]{X}$.
  \end{itemize}
  The induction principle for $\sheafify[P]{X}$ states: given a sheaf $Y$ with structure map $d_Y$ and $f : X \to Y$, there is a unique map $f^\dagger : \sheafify[P]{X} \to Y$ such that $f^\dagger \ap (\ret \ap x) = f \ap x$.
\end{definition}

By \emph{quotient-inductive type} we mean a type that is generated inductively by its constructors and simultaneously quotiented by its equations. Such types are instances of higher inductive types in homotopy type theory~\cite{hottbook}, and were specifically studied by Thorsten Altenkirch and Ambrus Kaposi~\cite{altenkirch16:_type}. To be precise, we require a variant of \emph{conditional quotient-inductive types} formulated by Peter Dybjer and Hugo Moeneclaey~\cite{dybjer18:_finit_hit}, and briefly considered by Marcelo Fiore, Andrew Pitts, and S.~C.~Steenkamp~\cite{FiorePS22}, because the second sheaf equation is conditioned by a $P \ap a$-indexed family of equations.
We could also construct $P$-sheafification by \emph{nullification} of Egbert Rijke, Mike Shulman, and Bas Spitters~\cite[\S2.3]{rijke-shulman-spitters-2020}, which is the least modality inverting the maps $P \ap a \to \one$, for all $a \of A$. They too construct the nullification as a higher-inductive type.
We shall not dwell further on foundational assumptions that guarantee the existence of sheafification---suffice it to say that~\eqref{eq:sheafify-logical-form} guarantees its existence under our working assumptions.

Observe that in view of \cref{cor:sheaf-maps-homomorphisms}, we need not state that $f^\dagger$ provided by the induction principle for $\sheafify[P]{X}$ is a homomorphism with respect to the structure maps, but if we did, we would write down the equation $f^\dagger \ap (\ask \ap a \ap \kappa) = d_Y \ap a \ap (f^\dagger \circ \kappa)$.

We are tempted to think of the elements of $\sheafify[P]{X}$ as effectful computations on~$X$, where $\ret \ap x$ is the pure computation returning~$x : X$, and $\ask \ap a \ap \kappa$ is a computation that queries~$a : A$ and proceeds as $\kappa \ap u$ upon receiving evidence $u : P \ap a$ as an answer. The first equation says that a query can be skipped if the answer is already available, and the second one says that we may assume answers to queries when proving equality of computations.

\begin{example}
  For any $P : A \to \Prop$, a map $f : X \to \sheafify[P]{Y}$ can be thought as computing its values with the aid of a $P$-oracle. Andrew Swan~\cite{swan24:_oracl} takes such maps as the basic notion. For example, the excluded middle oracle $\lem$ from \cref{exa:oracle-lem} can be used to define the halting oracle $H : \NN \times \NN \to \sheafify[\lem]{\NN}$ by
  \begin{equation*}
    H \ap (m, n) \defeq \ask \ap (\some{k \of \NN} T(m, n, k)) \ap
     (\lam{b} \;\mathsf{if}\; b \;\mathsf{then}\; \ret \ap 1 \;\mathsf{else}\; \ret \ap 0).
  \end{equation*}
  To decide whether the $n$-th Turing machine stops on input~$m$, $H$ uses Kleene's~$T$ predicate~\cite{kleene43:_recur} to ask the oracle whether there is an execution trace~$k$ witnessing termination, and answers accordingly.
\end{example}

\subsection{Equifoliate trees}
\label{sec:oracle-modal-pred}

Treating the elements of $\sheafify[P]{X}$ as $P$-oracle computations works only so well, because the quotient collapses intensional distinctions that one normally expects under the classic treatment of computations modeled by trees~\cite{PlotkinP:NotionsOfComputations}.
The sheaf equations mask information about which queries are asked, and also, computations are entirely streamlined, as the answer type $P \ap a$ has at most one element.
For example, if the oracle answers an existential statement $\some{b \of B} Q \ap b$ by providing a witness~$b$ and a proof of~$Q \ap b$, the computation may use~$b$ only in a limited fashion because neither further queries nor the returned element may vary with~$b$.

A computationally more faithful account of oracle computations would specify an oracle by a \emph{non-propositional container} $P : A \to \Type$ and then model $P$-oracle computations returning values of type~$X$ as trees $\Tree[P] X$, generated inductively by the constructors:
\begin{itemize}
\item $\leaf : X \to \Tree[P] X$,
\item $\node : \dprod{a \of A} (P \ap a \to \Tree[P] X) \to \Tree[P] X$.
\end{itemize}

It is natural to ask how $\Tree[P] X$ corresponds to the sheafification $\sheafify[\Trunc P] X$, where
$\Trunc P : A \to \Prop$ is the \defemph{propositional truncation} of~$P$, which may be defined as $\Trunc P a \defeq \some{u \of P \ap a} \top$.
Without further assumptions there is no connection: a tree $t \in \Tree[P] X$ may compute different values along different paths, while attempts to lift an element of $\sheafify[\Trunc P] X$ to a tree will generally require some form of the axiom of choice. We address the former problem first.

\begin{definition}
  \label{def:mem-shfy}%
  Given $P : A \to \Prop$ and a type $X$, define the \defemph{membership} relation $x \in t$ for $x : X$ and $t : \sheafify[P] X$ inductively as follows:
  \begin{itemize}
  \item $x \in \ret \ap y$ if $\oracle{P} \ap (x = y)$,
  \item $x \in \ask \ap a \ap \kappa$ if $\all{u \of P \ap a} x \in \kappa \ap u$.
  \end{itemize}
\end{definition}

For \cref{def:mem-shfy} to be well-formed, we need to check that $\in$ respects the sheaf equations.
For the first equation, take any $a : A$, $\kappa : P \ap a \to \sheafify[P] X$ and $u : P \ap a$,
and observe that $x \in \ask \ap a \ap \kappa \liff (\all{v \of P \ap a} x \in \kappa \ap v) \liff x \in \kappa \ap u$ because all elements of $P \ap a$ are equal.
For the second equation, observe that $x \in t$ is $\oracle{P}$-stable by construction.

The occurrence of $\oracle{P}$ in the first clause of \cref{def:mem-shfy} is annoying but it is needed, for in general $\ret \ap x = \ret \ap y$ implies only $\oracle{P} (x = y)$.
Luckily, in many concrete cases $X$ is $P$-separated, in which case the application of~$\oracle{P}$ is 
 unnecessary; one such instance occurs when~$P$ is $\neg\neg$-dense and~$X$ has $\neg\neg$-stable equality.
The second clause also appears odd, for would it not be more natural to require that~$x$ appear in \emph{some}
subtree~$\kappa \ap u$ instead of all of them?
No, for if $\neg P \ap a$ holds for some $a : A$, then by the second sheaf equation $\ask \ap a \ap \kappa$ and $\ret \ap x$ are equal, for all $x : X$.



\begin{lemma}
  \label{lem:elem-to-ret-eq}%
  For any $t : \sheafify[P] X$ and $x : X$, if $x \in t$ then $\ret \ap x = t$.
\end{lemma}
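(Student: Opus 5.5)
We prove the statement by induction on~$t$, using the induction principle of the quotient-inductive type $\sheafify[P]{X}$. The goal $\all{x \of X} (x \in t \lthen \ret \ap x = t)$ is a proposition, because equality is propositional. So we may eliminate into it using only the point constructors $\ret$ and $\ask$. The equation constructors need no separate checking, since the motive is propositional and hence automatically respects them.

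\emph{Case $t = \ret \ap y$.} By \cref{def:mem-shfy}, the hypothesis $x \in \ret \ap y$ unfolds to $\oracle{P} \ap (x = y)$, and we must show $\ret \ap x = \ret \ap y$. First, $\ap$-congruence gives $x = y \lthen \ret \ap x = \ret \ap y$, so monotonicity of $\oracle{P}$ yields $\oracle{P} \ap (\ret \ap x = \ret \ap y)$. Second, $\sheafify[P]{X}$ is a $P$-sheaf, since its constructors and equations are exactly the structure map and the two sheaf equalities of \cref{thm:oracle-sheaf-when}. By \cref{cor:P-sheaves-P-separated} it is therefore $P$-separated, and we conclude $\ret \ap x = \ret \ap y$. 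If we want to avoid citing the corollary, we can instead run its proof inline: induct on the proof of $\oracle{P} \ap (\ret \ap x = \ret \ap y)$, handling the $\prf$ case directly and the $\ask$ case with the second sheaf equation of $\sheafify[P]{X}$.

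\emph{Case $t = \ask \ap a \ap \kappa$.} The induction hypothesis states that for every $u : P \ap a$ and $x : X$, if $x \in \kappa \ap u$ then $\ret \ap x = \kappa \ap u$. The hypothesis $x \in \ask \ap a \ap \kappa$ unfolds to $\all{u \of P \ap a} x \in \kappa \ap u$. We want $\ret \ap x = \ask \ap a \ap \kappa$, and we get it from the second sheaf equation (the conditional equation constructor) instantiated at~$a$. So it suffices to prove $P \ap a \lthen \ret \ap x = \ask \ap a \ap \kappa$. Given $u : P \ap a$, the first sheaf equation gives $\ask \ap a \ap \kappa = \kappa \ap u$. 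The induction hypothesis applied to $x \in \kappa \ap u$ gives $\ret \ap x = \kappa \ap u$, and chaining the two equalities finishes the case.

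The main obstacle is making sure the induction principle is legitimately available for this $\Prop$-valued motive. \Cref{def:sheafification-qit} states the principle only as a recursor into sheaves. To use it here, we either appeal to the dependent elimination principle of the quotient-inductive type into propositions, or we reduce to the recursor. For the reduction, note that $\Prop_P$ is a sheaf (\cref{cor:propj-sheaf}), and the predicate $\lam{t} \all{x} (x \in t \lthen \ret \ap x = t)$ is $\oracle{P}$-stable, because equality in the separated type $\sheafify[P]{X}$ is stable. Uniqueness of $f^\dagger$ then lets us compare this predicate with the constantly true one, provided we can check agreement on $\ret$. That is exactly the $\ret$ case above. The $\ask$ case is then absorbed by the homomorphism property from \cref{cor:sheaf-maps-homomorphisms}.
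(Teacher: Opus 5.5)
Your proof is correct and follows the paper's argument: induction on $t$; in the $\ret$ case, monotonicity of $\oracle{P}$ followed by separatedness of the sheaf $\sheafify[P]{X}$; in the $\ask$ case, the second sheaf equation lets you assume $u : P \ap a$, and then the first sheaf equation and the induction hypothesis finish it. Your extra justification for eliminating into a $\Prop$-valued motive is sound (including the reduction via uniqueness of $f^\dagger$ into $\Prop_P$), but the paper does not spell this out and simply takes induction for granted.
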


\begin{proof}
  The proof proceeds by induction on~$t$.
  If $t = \ret \ap y$ then $\oracle{P} \ap (x = y)$, therefore $\oracle{P} \ap (\ret \ap x = t)$ and
  $\ret \ap x = t$ because sheaves are separated.
  If $t = \ask \ap a \ap \kappa$ then $\all{u \of P \ap a} x \in \kappa \ap u$, hence by induction hypothesis and the first sheaf equation $\all{u \of P \ap a} \ret \ap x = \kappa \ap u = t$, from which $\ret \ap x = t$ by the second sheaf equation.
\end{proof}

\begin{lemma}
  \label{lem:sheaf-ext}%
  If $s, t : \sheafify[P] X$ are such that $\all{x \of X} (x \in s \liff x \in t)$ then $s = t$.
\end{lemma}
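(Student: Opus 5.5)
Since $s = t$ is a proposition, we may argue by induction on $s$ (and, where needed, on $t$). The tool is \cref{lem:elem-to-ret-eq}: whenever some $x$ lies in $s$ or $t$, we can replace that element by $\ret \ap x$. The plan is to show that $s = t$ holds modally, that is $\oracle{P}(s = t)$, by descending through the tree structure of $s$. We then finish by $P$-separatedness of $\sheafify[P]X$ (\cref{cor:P-sheaves-P-separated}).

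First, we prove by induction on $s$ the statement: for all $t$, if $\all{x \of X} (x \in s \liff x \in t)$ then $s = t$.

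In the case $s = \ret \ap y$, we have $y \in \ret \ap y$ because $\oracle{P}(y = y)$ holds by $\prf$. The hypothesis then gives $y \in t$, and \cref{lem:elem-to-ret-eq} yields $\ret \ap y = t$, as required.

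In the case $s = \ask \ap a \ap \kappa$, we aim to apply the second sheaf equation. It suffices to prove $P \ap a \lthen s = t$. So assume $u : P \ap a$. The first sheaf equation gives $s = \kappa \ap u$, and the membership relation respects this equation (as checked after \cref{def:mem-shfy}). Hence $x \in \kappa \ap u \liff x \in s \liff x \in t$ for all $x$. The induction hypothesis for $\kappa \ap u$ then gives $\kappa \ap u = t$, so $s = t$. Discharging $u$ and applying the second sheaf equation gives $s = t$ outright.

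The main thing to verify is that induction on $s$ is legitimate. The eliminator of $\sheafify[P]{X}$ is stated for sheaf targets, so we should use its dependent form into a family of propositions. Here the target family is $\lam{s} \all{t} (\ldots) \lthen s = t$, which is propositional. A $\Prop$-valued family should automatically respect the quotient equations, because two proofs of a proposition are equal. Equivalently, we can phrase the argument as mapping into the sheaf $\Prop_P$ of \cref{cor:propj-sheaf}. For that we must check that the predicate is $\oracle{P}$-stable, which holds because equality on $\sheafify[P]X$ is $P$-separated and universally quantified implications into stable propositions remain stable. With this in place, the two cases above constitute the whole proof.
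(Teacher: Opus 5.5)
Your proof is correct and follows essentially the paper's argument: the $\ret$ case goes through \cref{lem:elem-to-ret-eq}, and the $\ask$ case assumes an answer via the second sheaf equation, collapses to a subtree via the first, and applies the induction hypothesis. The one difference is that you induct on $s$ alone with $t$ generalized, which avoids the paper's case analysis on $t$ (the symmetric $\ret$ case and the second answer $v : P \ap b$), so your version is slightly more economical. Your opening plan of proving $\oracle{P}(s = t)$ and then invoking separatedness is never actually used, because applying the second sheaf equation directly already does that job.
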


\begin{proof}
  The proof proceeds by induction on $s$ and $t$.
  If $s = \ret \ap x$ then $x \in t$ and we may apply \cref{lem:elem-to-ret-eq}. The case $t = \ret \ap y$ is symmetric.
  Consider $s = \ask \ap a \ap \kappa$ and $t = \ask \ap b \ap \mu$. By the second sheaf equation it suffices to show $s = t$ assuming $u : P \ap a$ and $v : P \ap b$. With~$u$ and~$v$ in hand, $\all{x \of X} x \in s \liff x \in \kappa \ap u$ and $\all{x \of X} x \in t \liff x \in \mu \ap v$, whence $\all{x \of X} x \in \kappa \ap u \liff x \in \mu \ap v$, and so by the induction hypothesis and the first sheaf equation $s = \kappa \ap u = \kappa \ap v = t$.
\end{proof}

The corresponding notion of membership for non-propositional containers is as follows.

\begin{definition}
  Given $P : A \to \Type$ and a type $X$, define the \defemph{membership} relation $x \in t$ for $x : X$ and $t : \Tree[P] X$ inductively as follows:
  \begin{itemize}
  \item $x \in \leaf \ap y$ if $\oracle{\Trunc P} \ap (x = y)$
  \item $x \in \node \ap a \ap \kappa$ if $\all{u \of P \ap a} x \in \kappa \ap u$.
  \end{itemize}
\end{definition}

We now identify those computation trees that compute just one element, modally speaking.

\begin{definition}
  Given $P : A \to \Type$ and a type~$X$, define $\equi : \Tree[P] X \to \Prop$ inductively by the clauses:
  \begin{itemize}
  \item $\equi \ap (\leaf \ap x)$, for all $x : X$,
  \item $\equi \ap (\node \ap a \ap \kappa)$, when $\all{u \of P \ap a} \equi \ap (\kappa \ap u)$ and
    $\all{x \of X} \all{u, v \of P \ap a} x \in \kappa \ap u \lthen x \in \kappa \ap v$.
  \end{itemize}
  An \defemph{equifoliate tree} is one satisfying~$\equi$.
 Let $\EquiTree[P] X \defeq \dsum{t \of \Tree[P] X} \equi \ap t$ be the type of equifoliate trees.
\end{definition}

The nomenclature is sensible because an equifoliate tree computes a single element, modally speaking:

\begin{lemma}
  Given $P : A \to \Type$, a type~$X$, an equifoliate tree $t : \Tree[P] X$, and $x, y : X$, if $x \in t$ and $y \in t$ then $\oracle{\Trunc P} \ap (x = y)$.
\end{lemma}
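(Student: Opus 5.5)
We proceed by induction on the equifoliate tree~$t$, proving for every $t : \Tree[P] X$ the statement
\[
  \equi \ap t \lthen \all{x, y \of X} (x \in t \lthen y \in t \lthen \oracle{\Trunc P} \ap (x = y)).
\]
Since the goal is a proposition, induction on the inductive type $\Tree[P] X$ is permitted. Alternatively, we may induct on the proof of $\equi \ap t$.

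\emph{Leaf case.} Let $t = \leaf \ap z$. Then $x \in t$ gives $\oracle{\Trunc P} \ap (x = z)$ and $y \in t$ gives $\oracle{\Trunc P} \ap (y = z)$. A modality preserves conjunctions, so we obtain $\oracle{\Trunc P} \ap (x = z \land y = z)$. Monotonicity applied to the implication $(x = z \land y = z) \lthen x = y$ then yields $\oracle{\Trunc P} \ap (x = y)$.

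\emph{Node case.} Let $t = \node \ap a \ap \kappa$. From $\equi \ap t$ we have $\equi \ap (\kappa \ap u)$ for all $u : P \ap a$, together with the transfer property $x \in \kappa \ap u \lthen x \in \kappa \ap v$ for all $u, v$. The hypotheses give $\all{u \of P \ap a} x \in \kappa \ap u$ and $\all{u \of P \ap a} y \in \kappa \ap u$.

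Given any $u : P \ap a$, the induction hypothesis applied to $\kappa \ap u$ yields $\oracle{\Trunc P} \ap (x = y)$. Here the transfer property is not even needed, since membership in a node already quantifies over all branches. So we have $P \ap a \to \oracle{\Trunc P} \ap (x = y)$. Because the target is a proposition, this factors through $\Trunc P \ap a \lthen \oracle{\Trunc P} \ap (x = y)$ by elimination of the truncation. Applying the $\ask$ constructor of $\oracle{\Trunc P}$ at query~$a$, or equivalently using that $\Trunc P \ap a$ is $\oracle{\Trunc P}$-dense together with idempotency, gives $\oracle{\Trunc P} \ap (x = y)$.

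The only point requiring care is the node case: we must pass from the type-valued $P \ap a$ to the truncation $\Trunc P \ap a$ so that $\ask$ can be applied. This works because the goal $\oracle{\Trunc P} \ap (x = y)$ lives in $\Prop$, so eliminating the existential $\some{u \of P \ap a} \top$ into it is legitimate. Everything else is routine use of monotonicity and preservation of meets.
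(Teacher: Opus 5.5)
Your proof is correct and follows the same route as the paper. It uses induction on $t$, combines the two modal equalities in the leaf case, and in the node case turns the induction hypothesis $P \ap a \lthen \oracle{\Trunc P} \ap (x = y)$ into $\oracle{\Trunc P} \ap (x = y)$ by eliminating into the truncation and applying $\ask$ at~$a$. You spell out details the paper leaves implicit, and you rightly note that the transfer clause of $\equi$ is never used.
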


\begin{proof}
  We proceed by induction on~$t$. If $t$ is a leaf, the statement is immediate.
  If $t = \node \ap a \ap \kappa$ then $P \ap a \lthen \oracle{\Trunc P} \ap (x = y)$ is the induction hypothesis, from which $\oracle{\Trunc P} \ap (x = y)$ follows.
\end{proof}

For equifoliate trees to be a reasonable notion of sheaf computations, they should form a monad, which they do.

\begin{lemma}
  \label{lem:mem-bind}%
  Given $P : A \to \Type$, $f : X \to \Tree[P] Y$, $t : \EquiTree[P] X$, and $y : Y$,
  we have $y \in \bind \ap f \ap t$ if, and only if, $\all{x \of X} x \in t \lthen y \in f \ap x$.
\end{lemma}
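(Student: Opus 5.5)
The plan is to fix the obvious definition of $\bind$ on trees and prove the equivalence by induction on the underlying tree of~$t$, using the equifoliate hypothesis in the node case. We take $\bind \ap f \ap (\leaf \ap x) \defeq f \ap x$ and $\bind \ap f \ap (\node \ap a \ap \kappa) \defeq \node \ap a \ap (\lam{u} \bind \ap f \ap (\kappa \ap u))$.

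The first step is a small auxiliary fact: for every tree $s : \Tree[P] Y$ and $y : Y$, the proposition $y \in s$ is $\oracle{\Trunc P}$-stable. We prove it by induction on~$s$. In the leaf case $\oracle{\Trunc P}(\oracle{\Trunc P}(y = z)) \lthen \oracle{\Trunc P}(y = z)$ is idempotency. In the node case we use that a universally quantified family of stable propositions is stable: from $\oracle{\Trunc P}(\all{u} y \in \kappa \ap u)$, monotonicity gives $\oracle{\Trunc P}(y \in \kappa \ap u)$ for each~$u$, and the induction hypothesis finishes. We also need that $\Trunc P \ap a$ is $\oracle{\Trunc P}$-dense, hence any stable~$q$ satisfies $(\Trunc P \ap a \lthen q) \lthen q$. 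Since $q$ is a proposition, we may eliminate the truncation in $\Trunc P \ap a$ to obtain some $u : P \ap a$. Consequently $(\dprod{u \of P \ap a} q) \lthen q$ for any stable~$q$.

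In the leaf case $t = \leaf \ap x_0$ we must show $y \in f \ap x_0 \liff \all{x} \oracle{\Trunc P}(x = x_0) \lthen y \in f \ap x$. The backward direction instantiates at $x_0$ with $\prf$ of reflexivity. For the forward direction, we transport $y \in f \ap x_0$ under $\oracle{\Trunc P}$ by monotonicity to get $\oracle{\Trunc P}(y \in f \ap x)$. The stability fact then gives $y \in f \ap x$.

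In the node case $t = \node \ap a \ap \kappa$, equifoliateness provides $\equi \ap (\kappa \ap u)$ for all~$u$, so the induction hypothesis applies to every subtree. The left side thus unfolds to $\all{u \of P \ap a}\all{x} x \in \kappa \ap u \lthen y \in f \ap x$. The right side is $\all{x} (\all{u} x \in \kappa \ap u) \lthen y \in f \ap x$.
\begin{itemize}
\item Left to right: given $x$ with $x \in \kappa \ap u$ for all~$u$, the goal $y \in f \ap x$ is stable. By the auxiliary fact it therefore suffices to prove it assuming some $u : P \ap a$, and then the left side applies.
\item Right to left: given $u$ and $x \in \kappa \ap u$, the second equifoliate condition yields $x \in \kappa \ap v$ for every~$v$, so the right side applies.
\end{itemize}

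The main obstacle is the left-to-right node direction, where an answer $u : P \ap a$ must be conjured without an oracle. This is exactly where stability of membership and density of $\Trunc P \ap a$ are needed, and where care is required to eliminate the truncation only into a proposition.
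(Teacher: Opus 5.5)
Your proof is correct and follows the same route as the paper's: induction on $t$, stability of tree membership to close the leaf case, and the induction hypothesis together with the second equifoliate condition in the node case. Where you differ is that you make explicit two facts the paper uses only tacitly: membership in a tree is $\oracle{\Trunc P}$-stable, and density of $\Trunc P \ap a$ lets one discharge the hypothesis $u : P \ap a$ in the left-to-right node direction, where the paper only writes ``for any $u : P \ap a$, \ldots\ hence $y \in f \ap x$''.
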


\begin{proof}
  We proceed by induction on~$t$.

  Consider the case $t = \leaf \ap z$.
  If $y \in \bind \ap f \ap t = f \ap z$ and $x \in t$ then $\oracle{\Trunc P} \ap (x = z)$, so $\oracle{\Trunc P} \ap (y \in f \ap x)$, hence $y \in f \ap x$.
  Conversely, if $\all{x \of X} x \in t \lthen y \in f \ap x$ then $y \in f \ap z = \bind \ap f \ap t$ because $z \in \leaf \ap z$.

  Consider the case $t = \node \ap a \ap \kappa$.
  If $y \in \bind \ap f \ap t = \node \ap a (\lam{u} \bind \ap f \ap (\kappa \ap u))$ and $x \in t$ then, for any $u : P \ap a$, $x \in \kappa \ap u$ hence $y \in f \ap x$ by the induction hypothesis and the fact that $y \in \bind \ap f \ap (\kappa \ap u)$.
  Conversely, suppose $\all{x \of X} x \in t \lthen y \in f \ap x$.
  It suffices to show that, for a given $u : P \ap a$, we have $y \in \bind \ap f \ap (\kappa \ap u)$.
  By the induction hypothesis this follows once we demonstrate $\all{x \of X} x \in \kappa \ap u \lthen y \in f \ap x$. But if $x \in \kappa \ap u$ then $\all{v \of P \ap a} x \in \kappa \ap v$ because~$t$ is equifoliate, hence $x \in t$ and $y \in f \ap x$ by assumption.
\end{proof}

\begin{theorem}
  The tree monad on $\Tree[P]$ restricts to~$\EquiTree[P]$, for any $P : A \to \Type$.
\end{theorem}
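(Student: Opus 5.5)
Since the monad structure of $\Tree[P]$ is given by $\leaf$ as unit and the usual $\bind$ (substitution at leaves), and $\EquiTree[P] X$ is the subtype of trees satisfying the proposition $\equi$, the monad laws are inherited once we know the operations preserve $\equi$. By proof irrelevance and function extensionality, equality in $\EquiTree[P] X$ reduces to equality of underlying trees. It therefore suffices to show two things: $\leaf \ap x$ is equifoliate, which holds by the first clause of $\equi$, and $\bind \ap f \ap t$ is equifoliate whenever $t$ is equifoliate and $f : X \to \EquiTree[P] Y$ (more precisely, $f$ composed with the first projection has equifoliate values).

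We prove the preservation of $\equi$ by induction on the equifoliate tree~$t$. In the case $t = \leaf \ap z$ we have $\bind \ap f \ap t = f \ap z$, which is equifoliate by assumption on~$f$. In the case $t = \node \ap a \ap \kappa$ we have $\bind \ap f \ap t = \node \ap a \ap (\lam{u} \bind \ap f \ap (\kappa \ap u))$, and we must check the two conjuncts of the $\node$ clause. The first, $\all{u \of P \ap a} \equi \ap (\bind \ap f \ap (\kappa \ap u))$, is the induction hypothesis, available because each $\kappa \ap u$ is equifoliate since $t$ is.

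The second conjunct is the main point: for $y : Y$ and $u, v : P \ap a$ with $y \in \bind \ap f \ap (\kappa \ap u)$, we need $y \in \bind \ap f \ap (\kappa \ap v)$. Here we apply \cref{lem:mem-bind}, which is applicable because $\kappa \ap u$ and $\kappa \ap v$ are equifoliate. It turns the hypothesis into $\all{x \of X} x \in \kappa \ap u \lthen y \in f \ap x$, and reduces the goal to $\all{x \of X} x \in \kappa \ap v \lthen y \in f \ap x$. Given $x \in \kappa \ap v$, the equifoliate condition for~$t$ (applied with $v, u$ swapped) yields $x \in \kappa \ap u$, hence $y \in f \ap x$.

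The only delicate step is this second conjunct, since it depends on \cref{lem:mem-bind} holding for equifoliate arguments only. This is why the induction must carry the equifoliate hypothesis on subtrees rather than working over arbitrary trees. Notably, the equifoliate property of $f$'s values is used only in the leaf case.
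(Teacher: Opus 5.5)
Your proof is correct and follows essentially the same route as the paper: induction on $t$, with the leaf case discharged by the assumption on $f$. In the node case you apply \cref{lem:mem-bind} to the equifoliate subtrees $\kappa \ap u$ and $\kappa \ap v$, combined with the equifoliate condition of $t$ (the paper leaves the $u$/$v$ swap implicit). Your side remarks, that the monad laws are inherited because equality of equifoliate trees reduces to equality of the underlying trees by uniqueness of proofs, and that $f$ being equifoliate is only needed at leaves, are accurate points the paper does not spell out.
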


\begin{proof}
  The unit of the tree monad is~$\leaf$, and leaves are equifoliate.
  To show that $\bind$ restrict to equifoliate trees as well, consider any $f : X \to \EquiTree[P] Y$ and $t : \EquiTree[P] X$. We show that $\bind \ap f \ap t$ is equifoliate by induction on~$t$.

  If $t = \leaf \ap x$ then $\bind \ap f \ap t = f \ap x$ is equifoliate by assumption.

  Consider the case $t = \node \ap a \ap \kappa$, so $\bind \ap f \ap t = \node \ap a \ap (\lam{u} \bind \ap f \ap (\kappa \ap u))$.
  The subtrees of $\bind \ap f \ap t$ are equifoliate by the induction hypothesis.
  Given $y : Y$ and $u, v : P \ap a$ such that $y \in \bind \ap f \ap (\kappa \ap u)$, we need to show $y \in \bind \ap f \ap (\kappa \ap v)$. By \cref{lem:mem-bind}, $\all{x \of X} x \in \kappa \ap u \lthen y \in f \ap x$.
  Because $t$ is equifoliate, this implies $\all{x \of X} x \in \kappa \ap v \lthen y \in f \ap x$, and then
  \cref{lem:mem-bind} gets us the desired $y \in \bind \ap f (\kappa \ap v)$.
\end{proof}

\begin{theorem}
  Given $P : A \to \Type$, there is a map $\updelta_P : \EquiTree[P] X \to \sheafify[\Trunc P] X$ such that, for all $x : X$ and $t : \EquiTree[P] X$, $x \in t \liff x \in \updelta_P \ap t$.
\end{theorem}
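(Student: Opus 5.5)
The difficulty is that a node $\node \ap a \ap \kappa$ has a continuation $\kappa : P \ap a \to \Tree[P] X$ defined on a type, whereas $\sheafify[\Trunc P] X$ only accepts $\ask \ap a \ap h$ with $h : \Trunc P \ap a \to \sheafify[\Trunc P] X$. I therefore plan to avoid defining $\updelta_P$ by plain recursion. Instead, I will prove by induction on equifoliate trees that
\[
  \all{t \of \EquiTree[P] X} \someone{s \of \sheafify[\Trunc P] X} \all{x \of X} (x \in t \liff x \in s),
\]
and then extract $\updelta_P$ by unique choice. Uniqueness is immediate from \cref{lem:sheaf-ext}: two elements of $\sheafify[\Trunc P] X$ with the same members are equal. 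So the real work is existence, proved by induction on $t$ while carrying $\equi \ap t$ along.

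For a leaf $\leaf \ap y$, I take $s \defeq \ret \ap y$. The membership clauses coincide verbatim: both say $\oracle{\Trunc P} \ap (x = y)$.

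For a node $t = \node \ap a \ap \kappa$ with $\equi \ap t$, the induction hypothesis gives, for each $u : P \ap a$, a unique $s_u$ with the same members as $\kappa \ap u$. I first define a map $h : \Trunc P \ap a \to \sheafify[\Trunc P] X$. Given $w : \Trunc P \ap a$, I prove $\someone{s} \all{x} (x \in s \liff \all{u \of P \ap a} x \in \kappa \ap u)$. Because this goal is a proposition, I may eliminate $w$ to obtain some $u_0 : P \ap a$, and then use $s_{u_0}$. Equifoliateness gives $x \in \kappa \ap u_0 \liff \all{u} x \in \kappa \ap u$, and uniqueness again follows from \cref{lem:sheaf-ext}. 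Unique choice then produces $h$.

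I set $s \defeq \ask \ap a \ap h$. Then $x \in s$ is by definition $\all{w \of \Trunc P \ap a} x \in h \ap w$. This is equivalent to $\all{u \of P \ap a} x \in \kappa \ap u$, which is $x \in t$. The equivalence holds in both directions because each $u$ yields a $w$, and each $w$ is handled by the property of $h$.

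The main obstacle is precisely this node step: getting from a type-indexed family of subtrees to a truncation-indexed one. The plan handles it by nesting unique choice inside the induction, and equifoliateness is exactly what makes the choice of $u_0$ irrelevant. A secondary technical point is that the statement proved by induction must be stated for trees together with their $\equi$ proofs, because $\equi$ is needed to relate $\kappa \ap u_0$ to all $\kappa \ap u$. Inducting on the underlying tree and assuming $\equi \ap t$ as a hypothesis handles this, since $\equi$ of a node yields $\equi$ of its subtrees.
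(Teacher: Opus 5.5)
Your proof is correct and is essentially the paper's argument: leaves go to $\ret$, and at a node \cref{lem:sheaf-ext} together with equifoliateness makes the choice of branch irrelevant, so the family over $P \ap a$ descends to $\Trunc{P \ap a}$. Your inner unique choice is exactly the paper's factorization of $\updelta_P \circ \kappa$ through $P \ap a \to \Trunc{P \ap a}$. The only difference is packaging: the paper defines $\updelta_P$ by direct recursion that carries the membership property, so your outer unique-existence statement and global unique choice are unnecessary, though harmless; in the paper's version the equation $\updelta_P \ap (\leaf \ap y) = \ret \ap y$, used in the next theorem, holds by definition rather than via \cref{lem:sheaf-ext}.
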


\begin{proof}
  We define $\updelta_P$ by induction on its argument and simultaneously show that it has the required property.
  For any $y \of X$, define $\updelta_P \ap (\leaf \ap y) \defeq \ret \ap y$ and note that $x \in \leaf \ap y \liff \oracle{\Trunc P} \ap (x = y) \liff x \in \ret \ap y$, for any $x : X$.

  Consider an equifoliate tree $\node \ap a \ap \kappa$, where $a : A$ and $\kappa : P \ap a \to \EquiTree[P] X$, and suppose $\updelta_P (\kappa \ap u)$ is already defined for all $u \of P \ap a$, and also $\all{x \of X} (x \in \kappa \ap u \liff x \in \updelta_P \ap (\kappa \ap u))$.
  We claim that for all $u, v : P \ap a$, we have $\updelta_P \ap (\kappa \ap u) = \updelta_P \ap (\kappa \ap v)$:
  indeed, by the assumption and \cref{lem:sheaf-ext} it suffices to show $\all{x \of X} (x \in \kappa \ap u \liff x \in \kappa \ap v)$, which holds because~$\node \ap a \ap \kappa$ is equifoliate.
  The claim implies that $\updelta_P \circ \kappa$ factors through the quotient map $P \ap a \to \Trunc {P \ap a}$
  as $\mu : \Trunc {P \ap a} \to \sheafify[\Trunc P] X$. We now define $\updelta_P \ap (\node \ap a \ap \kappa) \defeq
  \ask \ap a \ap \mu$.
\end{proof}

The map $\updelta_P$ is surjective, modally.

\begin{theorem}
  Given $P : A \to \Type$, the map $\updelta_P : \EquiTree[P] X \to \sheafify[\Trunc P] X$ is modally surjective:
  $\all{t \of \sheafify[\Trunc P] X} \oracle{\Trunc P} \ap (\some{e \of \EquiTree[P] X} \updelta_P \ap e = t)$.
\end{theorem}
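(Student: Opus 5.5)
We prove the statement by the induction principle of the quotient-inductive type $\sheafify[\Trunc P] X$. The strategy is to equip the predicate
\[
  S \ap t \defeq \oracle{\Trunc P} \ap (\some{e \of \EquiTree[P] X} \updelta_P \ap e = t)
\]
with enough structure that it can be eliminated into. $S \ap t$ is a proposition, and it is $\oracle{\Trunc P}$-stable by idempotency. A stable predicate respects the sheaf equations automatically: the first because $\ask \ap a \ap h = h \ap u$ is an equality in the type, and the second because propositions over equal elements coincide. Concretely, we either state the dependent eliminator of \cref{def:sheafification-qit} into propositions, or map into the sheaf $\Prop_{\Trunc P}$ of \cref{cor:propj-sheaf} and use uniqueness of $f^\dagger$. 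Either way, it suffices to handle the two constructors $\ret$ and $\ask$.

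\emph{The $\ret$ case.} For $t = \ret \ap x$, take $e \defeq \leaf \ap x$. It is equifoliate, and $\updelta_P \ap (\leaf \ap x) = \ret \ap x$ by definition. So the existential holds outright, and $S \ap t$ follows by $\prf$.

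\emph{The $\ask$ case.} Let $t = \ask \ap a \ap h$ with $h : \Trunc{P \ap a} \to \sheafify[\Trunc P] X$, and assume the induction hypothesis $\all{w \of \Trunc{P \ap a}} S \ap (h \ap w)$. We ask the oracle $a$ in $\oracle{\Trunc P}$: it suffices to prove $\Trunc{P \ap a} \lthen S \ap t$. So assume $w : \Trunc{P\ap a}$. Then $t = h \ap w$ by the first sheaf equation, and $S \ap (h \ap w)$ is the hypothesis. Hence we do not need to build a $\node$ at all; the modality absorbs the choice problem.

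The only delicate point is the elimination principle, and it is where I expect the real work. We must justify that a stable proposition-valued motive can be eliminated into. The cleanest route is to define $g : \sheafify[\Trunc P]X \to \Prop_{\Trunc P}$ by $g \defeq (\lam{x} \top)^\dagger$, with the ask/ret behaviour computed via \cref{cor:sheaf-maps-homomorphisms}. A likely better alternative is to prove $\all{t} S\ap t$ as an instance of the usual proof-irrelevant induction on quotient-inductive types into $\Prop$. For that, we check that $S$ is closed under both constructors and that equal elements give equal propositions, which holds trivially since $S$ is a predicate on the type. I would take the latter, stating the induction principle for propositions explicitly as a consequence of the universal property. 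To do so, apply the universal property to the sheaf $\dsum{t \of \sheafify[\Trunc P]X} S\ap t$, which is a sheaf since $S$ is stable, and use uniqueness to show that its projection composed with the lift is the identity.
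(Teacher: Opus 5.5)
Your proof is correct and is essentially the paper's. You proceed by induction on $t$, using $\leaf \ap x$ in the $\ret$ case; in the $\ask$ case you query the oracle at~$a$ to obtain $w : \Trunc{P \ap a}$, then combine the first sheaf equation $\ask \ap a \ap h = h \ap w$ with the induction hypothesis at~$w$. Your extra derivation of proposition-valued induction from the universal property, via the sheaf $\dsum{t \of \sheafify[\Trunc P]X} S \ap t$ and uniqueness of lifts, is a sound justification of a step the paper takes for granted.
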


\begin{proof}
  We proceed by induction on $t \of \sheafify[\Trunc P] X$.
  If $t = \ret \ap x$ then $\updelta_P \ap (\leaf \ap x) = \ret \ap x$, so we just apply $\eta$ to~$\leaf \ap x$.
  If $t = \ask \ap a \ap \kappa$, since $\oracle{\Trunc P}$ forces $\Trunc{P \ap a}$, we obtain modally a witness $w \of \Trunc{P \ap a}$ and, by the induction hypothesis at~$w$, some~$e$ with $\updelta_P \ap e = \kappa \ap w = \ask \ap a \ap \kappa$.
\end{proof}

The theorem lets us reason about elements of $\sheafify[\Trunc P] X$ by induction on equifoliate trees, where the inductive step has direct access to the untruncated container~$P$.


\section{Modalities in realizability toposes}
\label{sec:modalities-realizability}

Finally, we take a closer look at oracle modalities in the context of realizability.
As the topic is quite specialized, we content ourselves with brief descriptions and assume familiarity with
the notation and terminology of~\cite[Sec.~3]{Bauer:InstanceReducibility}, where a quick overview of realizability theory is available, or with the standard reference for realizability theory by Jaap van Oosten~\cite{oosten08:_realiz}.
We work with a \emph{relative realizability topos} $\RT{\AA, \AA'}$ over a partial combinatory algebra~$\AA$ with an elementary subalgebra~$\AA' \subseteq \AA$.
Our goal is to give an explicit description of modalities in~$\RT{\AA, \AA'}$.

\begin{lemma}
  \label{lem:surjective-prop-oracle-eq}%
  If $q : B \to A$ is surjective and $P : A \to \Prop$ then $\oracle{P} = \oracle{P \circ q}$.
\end{lemma}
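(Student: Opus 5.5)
We will establish the two inequalities $\oracle{P \circ q} \leq \oracle{P}$ and $\oracle{P} \leq \oracle{P \circ q}$ separately, each via \cref{cor:oracle-leq}. Here surjectivity of $q : B \to A$ is taken in the internal sense, $\all{a \of A} \some{b \of B} q \ap b = a$.

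For $\oracle{P \circ q} \leq \oracle{P}$, \cref{cor:oracle-leq} reduces the claim to $\all{b \of B} \oracle{P}(P \ap (q \ap b))$. This holds because $\oracle{P}$ forces $P$: the term $\ask \ap (q \ap b) \ap \prf$ proves it. Equivalently, $\cont{q}{\lam{b \ap u} u}$ is a morphism of propositional containers from $P \circ q$ to $P$, so functoriality of $\oracle{}$ already yields this direction. Surjectivity is not needed here.

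For $\oracle{P} \leq \oracle{P \circ q}$, \cref{cor:oracle-leq} reduces the claim to $\all{a \of A} \oracle{P \circ q}(P \ap a)$. Fix $a : A$. Since the goal is a proposition, we may eliminate the existential in surjectivity and obtain $b : B$ with $q \ap b = a$. Then $\ask \ap b \ap (\lam{u} \prf \ap u)$ proves $\oracle{P \circ q}(P \ap (q \ap b))$, and transporting along $q \ap b = a$ gives $\oracle{P \circ q}(P \ap a)$.

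Combining the two inequalities, antisymmetry in the frame $\Mod$, together with propositional and function extensionality, gives $\oracle{P} = \oracle{P \circ q}$.

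The only point requiring any care is the use of surjectivity. Because it is merely propositional existence, we cannot extract a section of $q$. However, the target $\oracle{P \circ q}(P \ap a)$ lies in $\Prop$, so eliminating the existential is legitimate. No other obstacle is expected.
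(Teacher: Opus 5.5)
Your proof is correct and is essentially the paper's argument. The paper notes that $P$ and $P \circ q$ are instance reducible to each other (via $q$ in one direction, via surjectivity in the other) and invokes \cref{prop:oracle-least-above-instance}, whereas you invoke \cref{cor:oracle-leq}; your single-query terms $\ask \ap (q \ap b) \ap \prf$ and $\ask \ap b \ap (\lam{u} \prf \ap u)$ are exactly those two instance reductions, and you correctly eliminate the propositional existential from surjectivity only into a $\Prop$-valued goal.
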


\begin{proof}
  By \cref{prop:oracle-least-above-instance}, because $P$ and $P \circ q$ are instance reducible to each other.
\end{proof}

\Cref{lem:surjective-prop-oracle-eq} allows us to restrict attention to oracles induced by predicates on partitioned assemblies, because every object is covered by one~\cite[Prop.~3.2.7]{oosten08:_realiz}. A \defemph{partitioned assembly} $\asm{X} = (X, \rho_X)$ is given by a set~$X$ and a map $\rho_X : X \to \AA$ which assigns to each $x \in X$ its realizer~$\rho_X \ap x$. Note that different elements may share the same realizer. A predicate on a partitioned assembly~$\asm{X}$ is given by a map $P : X \to \pow{\AA}$.

A straightforward calculation shows that the oracle modality $\oracle{P}$ is represented by a map $\Oracle{P} : \pow{\AA} \to \pow{\AA}$, whose value at $S \in \pow{\AA}$ is the least $\Oracle{P} \ap S \in \pow{\AA}$ satisfying
\begin{equation}
  \label{eq:oracle-asm}
  \textstyle
  \Oracle{P} \ap S =
  \begin{aligned}[t]
  &\set{\pair{\numeral{0}, a} \such a \in S}
  \cup {} \\
  &\textstyle\bigcup_{x \in X} \set{\pair{\numeral{1}, \pair{\rho_X \ap x, c}} \such
    c \in \AA \land
    \all{d \in P \ap x} (\defined{(c \app d)} \land c \app d \in \Oracle{P} \ap S)}.
  \end{aligned}
\end{equation}
An element of $\Oracle{P} \ap S$ encodes a well-founded tree. A leaf takes the form $\pair{\numeral{0}, a}$ where $a \in S$. A node is a triple $\pair{\numeral{1}, \pair{\rho_X \ap x, b}}$ for some $x \in X$, with $\rho_X \ap x$ encoding information about~$x$, and~$b$ computing the $P \ap x$-branching subtrees.

We may rephrase~\eqref{eq:oracle-asm} in terms of Weihrauch reducibility~\cite{brattka11:_weihr}.
An \defemph{extended Weihrauch predicate}~\cite[Def.~3.7]{Bauer:InstanceReducibility} is a map $f : \AA \to \pow{(\pow{\AA})}$, and its \defemph{support} is $\support{f} \defeq \set{r \in \AA \such f \ap r \neq \emptyset}$.
We say that~$f$ is \defemph{Weihrauch reducible} to $g : \AA \to \pow{(\pow{\AA})}$, written $f \wleq g$,
when there exist $\ell_1, \ell_2 \in \AA'$ such that for all $r \in \support{f}$:
\begin{itemize}
\item $\defined{\ell_1 \app r}$ and $\ell_1 \app r \in \support{g}$,
\item for every $\theta \in f \ap r$ there is $\xi \in g \ap (\ell_1 \app r)$ such that $\ell_2 \app r \rz \xi \lthen \theta$.
\end{itemize}
(Recall from~\cite[Sec.~3.2]{Bauer:InstanceReducibility} that $\ell_2 \app r \rz \xi \lthen \theta$ means: if $s \in \xi$ then $\defined{(\ell_2 \app r \app s)}$ and $\ell_2 \app r \app s \in \theta$.)

The (large) preorder of instance reducibilities in~$\RT{\AA, \AA'}$ is equivalent to the (small) preorder~$\wleq$ on extended Weihrauch predicates~\cite[Prop.~3.8]{Bauer:InstanceReducibility}. Therefore, no generality is lost when
oracle modalities are expressed in terms of extended Weihrauch predicates.

The oracle modality $\Oracle{f} : \pow{\AA} \to \pow{\AA}$ induced by an extended Weihrauch predicate~$f$ maps $S \in \pow{\AA}$ to the least $\Oracle{f} \ap S$ satisfying
\begin{equation}
  \label{eq:oracle-weihrauch}
  \Oracle{f} \ap S =
  \begin{aligned}[t]
  &\set{\pair{\numeral{0}, a} \such a \in S} \cup {}  \\
  &\textstyle%
  \bigcup_{b \in \AA}
  \bigcup_{\theta \in f \ap b}
  \set{\pair{\numeral{1}, b, c} \such
       c \in \AA \land
       \all{d \in \theta} (\defined{(c \app d)} \land c \app d \in \Oracle{f} \ap S)}.
  \end{aligned}
\end{equation}
One can check that the above-mentioned equivalence~\cite[Prop.~3.8]{Bauer:InstanceReducibility} transforms~\eqref{eq:oracle-asm} to~\eqref{eq:oracle-weihrauch}. We see again that the elements of $\Oracle{f} \ap S$ encode well-founded trees, except that tree branching is organized differently from~\eqref{eq:oracle-asm}, namely, by a realizer $b \in \AA$ and $\theta \in f \ap b$. Note that the realizer $\pair{\numeral{1}, b, c}$ carries no information about~$\theta$.

\begin{theorem}
  In a relative realizability topos $\RT{\AA, \AA'}$, every Lawvere-Tierney topology is of the form~\eqref{eq:oracle-asm}, equivalently~\eqref{eq:oracle-weihrauch}.
\end{theorem}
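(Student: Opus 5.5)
The argument combines \cref{cor:every-modality-oracle} with \cref{lem:surjective-prop-oracle-eq}, and then computes the resulting oracle modality in the realizability model. The internal language of $\RT{\AA, \AA'}$ satisfies all the working assumptions of \cref{sec:background}: impredicative $\Prop$, function and propositional extensionality, proof irrelevance and definite description. So all results of \cref{sec:oracle-modalities} hold there.

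Let $\j$ be a Lawvere-Tierney topology. By \cref{cor:every-modality-oracle}, $\j = \oracle{\predicate{\j}}$, where $\predicate{\j}$ is the predicate $I_\j$ on the object $\DProp_\j$. Next I choose a partitioned assembly $\asm{X}$ with a surjection $q : \asm{X} \to \DProp_\j$, which exists by~\cite[Prop.~3.2.7]{oosten08:_realiz}. \Cref{lem:surjective-prop-oracle-eq} then gives $\j = \oracle{I_\j \circ q}$. The predicate $P \defeq I_\j \circ q$ lives on a partitioned assembly, so it is represented by a map $P : X \to \pow{\AA}$. Hence it remains to verify that $\oracle{P}$ for such a $P$ is represented by the operator $\Oracle{P}$ of \eqref{eq:oracle-asm}.

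For this step I interpret the impredicative formula \eqref{eq:forcing-modality} in the topos. Its realizers of $\oracle{P}\ap S$ are those $e$ that, uniformly in a realizer of a candidate $R$, turn realizers of $S \lthen R$ and of $\all{x \of \asm{X}} (P \ap x \lthen R) \lthen R$ into a realizer of $R$. I would show that this is equivalent, in the topos sense, to the least fixed point \eqref{eq:oracle-asm}. The two directions are as follows.
\begin{itemize}
\item The least fixed point satisfies the two prefixed-point conditions, witnessed by the realizers $\lam{a}\pair{\numeral{0},a}$ and $\lam{r\, c}\pair{\numeral{1},\pair{r,c}}$. Here the realizer $r$ of an element $x$ of the partitioned assembly is exactly $\rho_X \ap x$, because $\rho_X$ is single-valued.
\item Conversely, a fold defined by recursion via a fixed-point combinator in $\AA'$ maps $\Oracle{P}\ap S$ into any prefixed point $R$. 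The recursion is justified by induction on the well-founded inductive generation of $\Oracle{P}\ap S$.
\end{itemize}
Together these show that the interpretation of $\oracle{P}$ and $\Oracle{P}$ realize each other's implications uniformly. Equivalently, one can interpret the inductive clauses $\prf$ and $\ask$ directly, since inductive propositions in a realizability topos are computed as least fixed points of the corresponding monotone operators on $\pow{\AA}$.

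Finally, the passage to \eqref{eq:oracle-weihrauch} uses the equivalence~\cite[Prop.~3.8]{Bauer:InstanceReducibility} between predicates on partitioned assemblies and extended Weihrauch predicates, up to mutual instance reducibility. Explicitly, I take $f\ap b \defeq \set{P \ap x \such \rho_X \ap x = b}$. \Cref{prop:oracle-least-above-instance} shows that mutually instance reducible predicates have the same oracle modality, and a direct comparison shows that the branching indexed by $x$ in \eqref{eq:oracle-asm} and the branching indexed by the pair $(b,\theta)$ in \eqref{eq:oracle-weihrauch} give identical sets. This works because the realizer of a node records only $\rho_X\ap x = b$ and never $\theta$. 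The main obstacle is the middle step: the interpretation of the impredicative formula is a different element of $\pow{\AA}$ from \eqref{eq:oracle-asm}, and one must check that the realizers translating between them lie in $\AA'$ and are uniform in $S$. The fixed-point combinator is what makes this uniformity work.
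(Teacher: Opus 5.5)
Your proof is correct and follows the paper's route. The paper's proof is exactly the combination of \cref{cor:every-modality-oracle} with \cref{lem:surjective-prop-oracle-eq}, applied to a partitioned-assembly cover of $\DProp_\j$; the paper merely asserts in the surrounding text that $\oracle{P}$ is represented by \eqref{eq:oracle-asm} and that this translates to \eqref{eq:oracle-weihrauch}. Your proposal fills in those omitted checks correctly: the fold via a fixed-point combinator in $\AA'$, and the explicit choice $f \ap b \defeq \set{P \ap x \such \rho_X \ap x = b}$, for which the two operators literally coincide, so the appeal to instance reducibility is not even needed.
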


\begin{proof}
  Combine \cref{cor:every-modality-oracle} and \cref{lem:surjective-prop-oracle-eq}.
\end{proof}

The formula~\eqref{eq:oracle-weihrauch} reveals a close connection with Takayuki Kihara's game-theoretic description~\cite{kihara23:_lawver_tiern} of modalities. An extended Weihrauch predicate~$f : \AA \to \pow{(\pow{\AA})}$ describes a game: Arthur either terminates the game with a final move $a \in S$, or plays a move $b \in \support{f}$, after which Nimue plays $\theta \in f \ap b$, and Merlin responds with $d \in \theta$.
Kihara's winning strategies for Arthur and Nimue correspond to well-foundedness of trees encoded by $\Oracle{f} \ap S$.


\section{Related and further work}
\label{sec:related-work}

Sheaves in the context of type theory have been studied by a number of authors.
Erik Palmgren~\cite{palmgren97:_const_sheaf_seman} and Ieke Moerdijk and Erik Palmgren~\cite{moerdijk02:_type} gave an early predicative account of sheaves.
In the context of homotopy type theory, Kevin Quirin and Nicolas Tabareau~\cite{quirin16:_lawver_tiern_homot_type_theor} formalized sheaves for general modalities,
Egbert Rijke, Mike Shulman, and Bas Spitters~\cite{rijke-shulman-spitters-2020} gave a general account of modalities,
and Thierry Coquand, Fabian Ruch, and Christian Sattler~\cite{coquand21:_const} studied constructive sheaf models of type theory.

Closer to our work are studies of sheaves that focus on computation and computability.
Martín Escardó and Chuangjie Xu~\cite{xu13:_const_model_unifor_contin} used sheaves on the Cantor space in type theory to show that every System~T definable functional is continuous.
We were inspired by the work of Andrew Swan~\cite{swan24:_oracl} on oracle modalities in cubical assemblies.
Martin Baillon, Assia Mahboubi, and Pierre-Marie Pédrot~\cite{baillon22:_garden_pythia_model_contin_depen_settin} extended the continuity results of Escardó and Xu to dependent functionals, with oracle query-answer trees featuring prominently.
In recent work~\cite{baillon26:_cantor_space}, the same authors describe sheafification as a quotient-inductive type parameterized by a \emph{logical operating system} $(I, O)$, which is essentially one of our propositional containers. Their quotient-inductive type imposes a single equation, in our notation
$\ask \ap a \ap (\lam{u \of P \ap a} v) = v$,
which under our working assumptions is equivalent to the two equations in \cref{def:sheafification-qit}.
%
%
%
%
%
%
They also use Escardó's \emph{dialogue trees}, which correspond to our unrestricted $\Tree[P]$ of \cref{sec:oracle-modal-pred} for a proof-relevant~$P$, and apply the construction to build a syntactic model of Martin-Löf type theory with a Cohen real, deriving uniform continuity of all definable Cantor-space functionals.

\section{Conclusion}
\label{sec:conclusion}

In the present work the computational viewpoint of sheaves as oracle computations is central.
Our description of a modality in terms of oracle queries and answers is both concrete
and general.
It reveals a connection between modalities and containers that will hopefully turn out to be useful in the future.
Sheafification as a quotient-inductive type is too extensional to give a faithful account of oracle computations. We have tried to improve on it by identifying the notion of equifoliate trees. These are not quotiented, but rather restricted in what answers they provide, and they are parameterized by general, non-propositional, containers.
It remains to be seen whether they beget a satisfactory theory of what one might call ``intensional sheaves''.

\paragraph*{Formalization}
We formalized \cref{sec:background,sec:oracle-modalities,sec:sheaves} in Rocq with the help of Claude Opus~4.7. The development is available at \cite{ahman26:_sheav}.




\bibliographystyle{plainurl}
\bibliography{references}

\end{document}